\newtheorem{thm}{Theorem}[section]
\newtheorem{prop}[thm]{Proposition}
\newtheorem{lem}[thm]{Lemma}
\newtheorem{cor}[thm]{Corollary}
\theoremstyle{definition}
\newtheorem{definition}[thm]{Definition}
\newtheorem{remark}[thm]{Remark}
\numberwithin{equation}{section}
\renewcommand{\to}{\xymatrix@1@=15pt{\ar[r]&}}
\renewcommand{\rightarrow}{\xymatrix@1@=15pt{\ar[r]&}}
\renewcommand{\leftarrow}{\xymatrix@1@=15pt{&\ar[l]}}
\renewcommand{\mapsto}{\xymatrix@1@=15pt{\ar@{|->}[r]&}}
\renewcommand{\twoheadrightarrow}{\xymatrix@1@=18pt{\ar@{->>}[r]&}}
\renewcommand{\hookrightarrow}{\xymatrix@1@=15pt{\ar@{^(->}[r]&}}
\newcommand{\congpf}{\xymatrix@L=0.6ex@1@=15pt{\ar[r]^-\sim&}}
\renewcommand{\cong}{\simeq}
\newcommand{\kt}{\mathcal T}
\newcommand{\ko}{\mathcal O}
\newcommand{\QQ}{\mathbb Q}
\newcommand{\CC}{\mathbb C}
\newcommand{\ZZ}{\mathbb Z}
\begin{document}

\title[]{Characteristic foliations -- A survey}

\author[F.\ Anella, D.\ Huybrechts]{Fabrizio Anella \& Daniel Huybrechts}

\address{
Institut de Math\'ematiques de Jussieu-Paris rive gauche,
4 Place Jussieu, 75005 Paris, France\\
 \& 
Mathematisches Institut, Universit\"at Bonn, Endenicher Allee 60, 53115 Bonn, Germany}
\email{anella@imj-prg.fr, huybrech@math.uni-bonn.de}

\begin{abstract} \noindent
This is a survey article, with essentially complete proofs,  of
a series of recent results concerning the
geometry of the characteristic foliation on smooth divisors in compact hyperk\"ahler manifolds, starting with work by Hwang--Viehweg \cite{HV}, but also covering articles by Amerik--Campana \cite{AC} and Abugaliev \cite{Ab1,Ab2}.

The restriction of the holomorphic symplectic form on a hyperk\"ahler manifold $X$ to a smooth hypersurface $D\subset X$ leads to a regular foliation ${\mathcal F}\subset{\mathcal T}_D$ of rank one, the characteristic foliation.  The picture is complete in dimension four and shows that the behavior of the leaves of ${\mathcal F}$ on $D$  is determined by the Beauville--Bogomolov square $q(D)$ of $D$. In higher dimensions, some of the results
depend on the abundance conjecture for $D$.

 \vspace{-2mm}
\end{abstract}

\maketitle
{\let\thefootnote\relax\footnotetext{This review was prepared in the context of the
seminar organised by the ERC Synergy Grant HyperK, Grant agreement ID 854361.
The talk was delivered on December 10, 2021.}}
\marginpar{}

\section{Main theorem and motivation}
Throughout, $D\subset X$ denotes a smooth connected hypersurface in a compact hyperk\"ahler manifold $X$ of complex dimension $2n$, i.e.\ $X$ is  a simply connected, compact
K\"ahler manifold such that $H^0(X,\Omega_X^2)$ is spanned by a holomorphic symplectic form $\sigma$. Usually $X$ will be in addition assumed to be projective, although one expects all results to hold in general.

The symplectic form $\sigma$ induces a regular foliation of rank one on $D$, i.e.\ a line sub-bundle ${\mathcal F}\subset\kt_D$. We shall denote a generic leaf of the
foliation by $L$ and its Zariski closure by $\bar L$. The space of leaves will be denoted $D/{\mathcal F}$. These notions will all be recalled in Sections \ref{sec:prep} and \ref{sec:Prep2}.

We will discuss the following table. The ultimate goal, only partially met at the moment, would be to establish the equivalence of all assertions in each row. We will throughout assume $n>1$, but see Remark \ref{rem:K3}.

\begin{center}
{ \begin{tabular}[t]{|c|c|c|c|c|c | c| } \hline
\raisebox{-0.12cm}{}&\raisebox{-0.12cm}{(i)}&\raisebox{-0.12cm}{(ii)}&\raisebox{-0.12cm}{(iii)}&\raisebox{-0.12cm}{(iv)}
\\[6pt]\hline
\raisebox{-0.12cm}{}&\raisebox{-0.12cm}{}&\raisebox{-0.12cm}{}&\raisebox{-0.12cm}{}&\raisebox{-0.12cm}{}\\
\raisebox{-0.12cm}{(1)}&\raisebox{-0.12cm}{$\dim \bar L=1$}&\raisebox{-0.12cm}{$L=\bar L\cong {\mathbb P}^1$}&\raisebox{-0.12cm}{$q(D)<0$}&\raisebox{-0.12cm}{$D$ is uniruled}\\
\raisebox{-0.12cm}{}&\raisebox{-0.12cm}{}&\raisebox{-0.12cm}{}&\raisebox{-0.12cm}{}&\raisebox{-0.12cm}{}\\
\raisebox{-0.12cm}{(2)}&\raisebox{-0.12cm}{$\dim\bar L=n$}&\raisebox{-0.12cm}{$\bar L$ Lagr.\ torus}&\raisebox{-0.12cm}{$q(D)=0$}&\raisebox{-0.12cm}{ $D=f^{-1}H\subset X\stackrel{f}{\to} B$ Lagr.\ fibration}\\
\raisebox{-0.12cm}{}&\raisebox{-0.12cm}{}&\raisebox{-0.12cm}{}&\raisebox{-0.12cm}{}&\raisebox{-0.12cm}{}\\
\raisebox{-0.12cm}{(3)}&\raisebox{-0.12cm}{$\dim \bar L=2n-1$}&\raisebox{-0.12cm}{$\bar L=D$}&\raisebox{-0.12cm}{$q(D)>0$}&\raisebox{-0.12cm}{$D$ is of general type}\\
\raisebox{-0.12cm}{}&\raisebox{-0.12cm}{}&\raisebox{-0.12cm}{}&\raisebox{-0.12cm}{}&\raisebox{-0.12cm}{}\\
 \hline
  \end{tabular}}
\end{center}
\bigskip

The table was originally proposed by Campana, cf.\ the work of Amerik--Guseva  cf.\ \cite{AG}.

Essentially, in each row the four conditions are known or at least expected to be equivalent to each other. The assumption on $D$ to be smooth is essential, see Section \ref{sec:exasing}.
The following serves as a guide for what will be discussed in the subsequent sections, where precise references will be provided.\\

\noindent{\bf Case (1): Closed leaves}, cf.\ \cite{AC,Boucksom,HuyKcone}.
$$(1):~~~~~~~~~\xymatrix{{\rm (i)}\ar@{=>}[r]^-{\S\, \ref{sec:1i-iv}}&{\rm (iv)}\ar@{=>}[r]^-{\S\, \ref{sec:1iv-iii}}&{\rm (iii)}\ar@{=>}[r]^-{\S\, \ref{sec:1iii-iv}}&{\rm (iv)}\ar@{=>}[r]^-{\S\, \ref{sec:1iv-i}}&{\rm (i)}}$$
Additionally, we 
observe the easy equivalence
$\xymatrix{{\rm(i)} \ar@{<=>}[r]^-{\S\, \ref{sec:1i--ii}}&{\rm (ii)}.}$\\

\noindent{\bf Case (2): Lagrangian fibrations}, cf.\ \cite{AG,Ab1,HO}.
$$(2):~~~~~~~~~\xymatrix{{\rm (i)}\ar@{=>}[r]^-{\S\, \ref{sec:2i-iii}}&{\rm (iii)}
\ar@{=>}[r]<4pt>^-{\S\, \ref{sec:2iii-iv}}|{~}&{\rm (iv)} 
\ar@{=>}[l]<4pt>^-{\S\, \ref{sec:2iv-iii}}\ar@{=>}[r]^-{\S\, \ref{sec:2iv-i}}&{\rm (i)}}$$

The implication $\xymatrix@C=20pt{{\rm (iii)}\ar@{=>}[r]|{~} &{\rm (iv)}}$ is currently only proved assuming the abundance conjecture for $D$, so the proof is only complete for $n=2$,
cf.\ \cite{AG}. 

We also address $\xymatrix@C=20pt{{\rm (iv)} \ar@{=>}[r]|{~}&{\rm (ii)}\ar@{=>}[r] &{\rm (i)}}$ in Section \ref{sec:2ii-iii} under the assumption that $\mathcal{O}(D)$ is base point free, that is satisfied for instance if $B$ is smooth.\\

\noindent{\bf Case (3): Dense leaves}, cf.\ \cite{Ab2,AC, HV}.
$$(3):~~~~~~~~~\xymatrix{{\rm (i)}\ar@{=>}[r]^-{\S\, \ref{sec:3i-iii}}|{~}&{\rm (iii)}\ar@{=>}[r]^-{\S\, \ref{sec:3iii-iv}}&{\rm (iv)}\ar@{=>}[r]^-{\S\, \ref{sec:3iv-iii}}&{\rm (iii)}\ar@{=>}[r]^-{\S\, \ref{sec:3iii-i}}&{\rm (i)}}$$
The implication $\xymatrix@C=20pt{{\rm (i)} \ar@{=>}[r]|{{~~}}&{\rm (iii)}}$
 is proved assuming that $\xymatrix@C=20pt{{\rm (iii)}\ar@{=>}[r]&{\rm (iv)}}$
 of Case (2) holds.
Note that the equivalence $\xymatrix@C=20pt{{\rm (i)}\ar@{<=>}[r]&{\rm (ii)}}$
 is clear in this case.\\

Additionally, we will also provide a direct argument for $\xymatrix{{\rm  (iv)}\ar@{=>}[r]^-{\S\, \ref{sec:3iv-i}}&{\rm (i)}}$.\\

\begin{remark}\label{rem:K3}
Let us consider the case $n=1$, i.e.\ $X$ a K3 surface. Then, a smooth hypersurface $D\subset X$ is just a smooth curve. Clearly, in this case (i)
holds in all three Cases (1), (2), and (3). The equivalence
of the other conditions (ii), (iii), and (iv)  in (1) and (2) and of (iii) and (iv) in (3)
 is well known.
\end{remark}


\noindent{\bf Acknowledgements.} We would like to thank R.\ Abugaliev and J.-B.\ Bost for discussions, for answering our questions, and for their help with the literature. 
We are particularly grateful to the referee for numerous corrections, instructive comments, and helpful suggestions.

We do not claim any originality for the results presented in this survey, although we sometimes give alternative arguments or provide more details. We hope that the survey contributes to the dissemination
of these results.


\section{Preparations I: Linear algebra of the characteristic foliation}\label{sec:prep}
We collect some linear algebra results and discuss applications to the geometry of the leaves of a foliation.

\subsection{} We begin with discussing some easy linear algebra results
that will be used throughout the later sections.
\smallskip

Let $W$ be a vector space together with a symplectic structure $\sigma$, i.e.\ 
$\sigma\in \bigwedge^2 W^\ast$ such that the induced map $\sigma\colon W\congpf W^\ast$ is an isomorphism. In this situation, the dimension of $W$ is even, so
$\dim W=2n$.

\begin{lem}\label{lem:LA1}
Assume $V\subset W$ is a subspace of codimension one. Then
the subspace
$$F\coloneqq \ker\left(\sigma|_V\colon\!\xymatrix{V\ar@{^(->}[r]& W\ar[r]^-\sigma& W^\ast\ar@{->>}[r]& V^\ast}\!\right)\subset V$$
is of dimension one. 

Similarly, if $U\subset W$ is of codimension two, then
either $\dim\ker(\sigma|_U\colon U\to U^\ast)=2$ or
$\sigma|_U\in \bigwedge^2U^\ast$ is non-degenerate, i.e.\  $\ker(\sigma|_U)=0$.
\end{lem}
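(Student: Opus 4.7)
The plan is to recast both kernels as intersections of the form $Z\cap Z^\perp$, where $Z^\perp$ denotes the symplectic orthogonal in $W$, and then to conclude by a short dimension count that relies only on the non-degeneracy of $\sigma$ on $W$. For any subspace $Z\subset W$, set $Z^\perp\coloneqq\{w\in W:\sigma(w,z)=0\text{ for all }z\in Z\}$. The composition displayed in the lemma is just the bilinear form $\sigma|_Z$ read as a linear map $Z\to Z^*$, and its kernel is tautologically $Z\cap Z^\perp$. Since $\sigma:W\congpf W^*$ is an isomorphism, $\dim Z^\perp=\dim W-\dim Z$, hence $\dim V^\perp=1$ and $\dim U^\perp=2$.

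For the codimension-one statement I would first argue that $V^\perp\subset V$. Otherwise, pick $w\in V^\perp\setminus V$; then $W=V\oplus\langle w\rangle$, while $\sigma(w,\cdot)$ vanishes on $V$ by the definition of $V^\perp$ and on $w$ by antisymmetry, so $\sigma(w,\cdot)\equiv 0$ on $W$, contradicting non-degeneracy. Therefore $F=V\cap V^\perp=V^\perp$ has dimension one, as required.

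For the codimension-two statement I would do a case analysis on the restricted form $\sigma|_{U^\perp}$, which is an alternating form on a two-dimensional space and is therefore either identically zero or non-degenerate. If it is non-degenerate, then $U^\perp$ is a symplectic subspace of $W$, so $W=U^\perp\oplus(U^\perp)^\perp=U^\perp\oplus U$, whence $U\cap U^\perp=0$ and $\sigma|_U$ is non-degenerate. If instead $\sigma|_{U^\perp}=0$, then every $w\in U^\perp$ satisfies $\sigma(w,\cdot)\equiv 0$ on $U^\perp$, hence $w\in(U^\perp)^\perp=U$; thus $U^\perp\subset U$ and $\ker(\sigma|_U)=U\cap U^\perp=U^\perp$ has dimension two. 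The only point requiring care is the appeal to the biduality $(U^\perp)^\perp=U$, which is valid precisely because $\sigma$ is non-degenerate on the ambient $W$; beyond this the proof is purely mechanical and I do not anticipate any genuine obstacle.
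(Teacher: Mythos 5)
Your proof is correct. It takes a somewhat different route from the paper's. The paper first bounds $\dim F\leq 1$ by observing that the composition defining $\sigma|_V$ factors through $W\cong W^\ast\twoheadrightarrow V^\ast$, whose kernel is one-dimensional, and then obtains $F\neq 0$ from the parity fact that an alternating form on the odd-dimensional space $V$ cannot be non-degenerate; the codimension-two case is handled analogously (kernel at most two-dimensional, and of even dimension since alternating forms have even rank). You instead identify the kernel outright as $Z\cap Z^{\perp}$ and argue directly: for the hyperplane you prove $V^{\perp}\subset V$ using antisymmetry on a complementary vector, which gives the sharper conclusion $F=V^{\perp}$ (i.e.\ $V$ is coisotropic, in the terminology the paper introduces right after the lemma), and for codimension two you replace the parity argument by the dichotomy on the two-dimensional space $U^{\perp}$, namely $\sigma|_{U^{\perp}}$ is either zero (then $U^{\perp}\subset U$ and the kernel is $U^{\perp}$) or non-degenerate (then $W=U\oplus U^{\perp}$ and the kernel vanishes). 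Both arguments are elementary and rest on the same dimension count $\dim Z^{\perp}=\operatorname{codim} Z$; the paper's is shorter because it quotes the standard even-rank fact, while yours is self-contained and pins down the kernel explicitly as $V^{\perp}$, resp.\ $U^{\perp}$ in the degenerate case, which meshes nicely with the isotropic/coisotropic discussion that follows the lemma. Your appeal to biduality $(U^{\perp})^{\perp}=U$ is correctly justified by non-degeneracy of $\sigma$ on $W$, so there is no gap.
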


\begin{proof} Since $W\congpf W^\ast\twoheadrightarrow V$ has a one-dimensional kernel, we have $\dim F\leq 1$. Furthermore, since $\dim V=2n-1$ is odd, the alternating form $\sigma|_V\in \bigwedge^2V^\ast$ cannot be non-degenerate, i.e.\
$\ker(\sigma|_V)\ne0$. Hence, $\dim F=1$. The proof of the second assertion is analogous.
\end{proof}

\begin{lem}\label{lem:LA2} Assume $V\subset W$ is of codimension one and  let $F=\ker(\sigma|_V)\subset V$. Then $\sigma$ naturally induces a symplectic structure $\bar\sigma$ on $V/F$.
\end{lem}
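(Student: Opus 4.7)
The plan is to verify in turn the two things that the statement asserts: first, that the alternating form $\sigma|_V$ genuinely descends to a well-defined $2$-form $\bar\sigma$ on the quotient $V/F$, and second, that this induced form is non-degenerate. Both will follow almost tautologically from the defining property of $F$ as the radical of $\sigma|_V$, so there is no real obstacle here; the lemma is essentially a translation of the previous one.

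For well-definedness, I would set $\bar\sigma([v_1],[v_2]) \coloneqq \sigma(v_1,v_2)$ and observe that changing representatives by elements of $F$ does not change the value. Indeed, if $v_1' = v_1 + f$ with $f \in F$, then by the very definition
\[
F = \ker\!\left(V \hookrightarrow W \xrightarrow{\sigma} W^\ast \twoheadrightarrow V^\ast\right)
\]
we have $\sigma(f,v_2) = 0$ for every $v_2 \in V$, and symmetrically for the second argument. Thus $\bar\sigma \in \bigwedge^2 (V/F)^\ast$ is well defined.

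For non-degeneracy, I would chase an element $[v] \in V/F$ with $\bar\sigma([v],[w]) = 0$ for all $[w] \in V/F$. This means $\sigma(v,w) = 0$ for every $w \in V$, hence $v$ lies in the radical of $\sigma|_V$, which is exactly $F$; therefore $[v] = 0$. Equivalently, the induced map $V/F \to (V/F)^\ast$ is injective, hence an isomorphism. As a sanity check, $\dim(V/F) = (2n-1) - 1 = 2n-2$ is even, consistent with the existence of a symplectic form.

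The only mildly substantive point is noticing that the same computation with $v_1, v_2$ swapped is needed to see that $\bar\sigma$ is well defined in its second argument too, but this is immediate from antisymmetry of $\sigma$. Apart from that, the argument is a direct unwinding of Lemma \ref{lem:LA1}, and I expect the write-up to be only a few lines.
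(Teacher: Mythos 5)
Your proof is correct and follows essentially the same route as the paper: both arguments amount to the observation that $F$ is exactly the radical of $\sigma|_V$, so the form descends and the induced map $V/F\to (V/F)^\ast$ is injective, hence an isomorphism by dimension count. The paper phrases this via the factorization $V\twoheadrightarrow V/F\hookrightarrow V^\ast$ rather than element-wise, but the content is identical.
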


\begin{proof} By definition of $F$, the restriction $\sigma|_V\colon V\to V^\ast$ factors
 through $V\twoheadrightarrow V/F\,\hookrightarrow V^\ast$. Furthermore, since $\sigma$ is alternating, $\sigma|_V$ takes values in $(V/F)^\ast \subset V^\ast$. Hence, for dimension reasons, $\bar\sigma\colon V/F\congpf (V/F)^\ast$.\end{proof}

Here are a few more concepts from linear algebra: A subspace $U\subset W$ of codimension $c$  of a symplectic vector space $(W,\sigma)$ is called \emph{isotropic} if $\sigma|_U\in \bigwedge^2U^\ast$ is trivial or, equivalently, if $$U\subset U^\perp\coloneqq\ker(W\congpf W^\ast\twoheadrightarrow U^\ast)=\{ w\in W\mid \sigma(U,w)=0\}.$$ If $U^\perp\subset U$, then the subspace $U$ is called \emph{coisotropic}. Since by definition $U^\perp$ is of
dimension $c$, a subspace $U\subset W$ is coisotropic if and only if $\dim \ker(\sigma|_U\colon U\to U^\ast)=c$.
Finally, $U\subset W$ is \emph{Lagrangian} if  $U$ is simultaneously isotropic and coisotropic, i.e.\ $U=U^\perp$.

\begin{lem}\label{lem:Folvert}
Assume $V\subset W$ is of codimension one and let $F=\ker(\sigma|_V)\subset V$.

{\rm (i)}  Then for any Lagrangian subspace $U\subset W$ that is contained in $V$ one has $F\subset U$. 

{\rm (ii)} If a subspace of codimension two $U\subset W$ is contained in $V$ and $F\subset U$, then $U$ is coisotropic.
\end{lem}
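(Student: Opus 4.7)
The plan is to first identify the line $F$ with the symplectic orthogonal $V^\perp\subset W$; once this is done, both parts follow quickly from the definitions of isotropic/coisotropic, with (ii) invoking the codimension-two dichotomy in the second half of Lemma \ref{lem:LA1}.

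To set this up, I would observe that since $\sigma$ is non-degenerate and $V\subset W$ has codimension one, $V^\perp$ is one-dimensional. The subspace $F$ is tautologically contained in $V\cap V^\perp$, and since $\dim F=1$ by Lemma \ref{lem:LA1}, the inclusions $F\subset V\cap V^\perp\subset V^\perp$ force $V^\perp\subset V$ and $F=V^\perp$. This identification is the workhorse for both statements.

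For (i), pick any $f\in F=V^\perp$. Every $u\in U$ lies in $V$, so $\sigma(f,u)=0$, i.e.\ $f\in U^\perp$. The Lagrangian condition $U=U^\perp$ then gives $F\subset U$.

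For (ii), with $U\subset V$ of codimension two containing $F$, I would check that $F\subset\ker(\sigma|_U)$: any $f\in F$ annihilates all of $V$, hence in particular all of $U$, and $f$ itself lies in $U$ by hypothesis. Thus $\ker(\sigma|_U)\ne 0$, and the dichotomy in the second part of Lemma \ref{lem:LA1} forces $\dim\ker(\sigma|_U)=2$, which is precisely the coisotropy of a codimension-two subspace. I do not anticipate any real obstacle; the whole argument is essentially bookkeeping around the identity $F=V^\perp$ and Lemma \ref{lem:LA1}.
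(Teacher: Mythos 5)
Your proof is correct and is essentially the paper's argument in different packaging: the identification $F=V^\perp$ is just an explicit, element-level restatement of the paper's commutative-diagram observation that $F$ pairs to zero with all of $V$ and hence lands in $U^\perp=U$, and your part (ii) coincides with the paper's (degeneracy of $\sigma|_U$ plus the dichotomy of Lemma \ref{lem:LA1}). No gaps.
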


\begin{proof} The first claim follows from the commutativity of the diagram
$$\xymatrix@R=20pt{&F\ar@{^(->}[d]&\\
U\ar[d]_0\ar@{^(->}[r]&V\ar[d]_-{\sigma|_V}\ar@{^(->}[r]&W\ar[d]^-\wr_-\sigma\\
U^\ast&V^\ast\ar@{->>}[l]&W^\ast\ar@{->>}[l]}$$
and the assumption that $U$ is Lagrangian, which implies that $U=\ker(W\congpf W^\ast\twoheadrightarrow U^\ast)$.

For the second assertion apply Lemma \ref{lem:LA1}. Since $F\subset U\subset V$, the restriction $\sigma|_U$ is degenerate and, therefore, $\dim \ker(\sigma_U)=2$.
\end{proof}

\subsection{}\label{sec:fol}
A \emph{regular foliation} of a smooth variety (or complex manifold) $D$ is a locally free
subsheaf ${\mathcal F}\subset {\mathcal T}_D$ with a locally free quotient and
such that ${\mathcal F}$ is integrable, i.e.\ $[{\mathcal F},{\mathcal F}]\subset{\mathcal F}$. Note that the integrability condition is automatically satisfied
if ${\rm rk}({\mathcal F})=1$, which is the case of interest to us. 

A \emph{leaf} of a foliation is a maximal connected and immersed complex submanifold
$L\subset D$ (more precisely, a complex manifold together with an injective immersion into $D$) with ${\mathcal F}|_L={\mathcal T}_L$ as subsheaves of
${\mathcal T}_D|_L$.  The integrability condition ensures that there exists a (unique) leaf through any point of $D$.  This is the Frobenius integrability theorem, cf.\ \cite[\S 5, Thm.\ 2]{Manin}. A submanifold $Z\subset D$ is \emph{invariant} under the foliation if ${\mathcal F}|_Z\subset {\mathcal T}_Z$ as subsheaves of ${\mathcal T}_D|_Z$. If $Z$ is a singular subvariety of $D$, then we call $Z$ invariant if its smooth locus is invariant. It is not hard to see that the Zariski closure of an invariant complex submanifold is invariant. Also note that every leaf $L$ intersecting an invariant submanifold $Z\subset D$ is contained in its closure.

A leaf $L\subset D$ is typically not closed. Its Zariski closure $\bar L\subset D$ can be identified with the smallest subvariety containing $L$ that is invariant under the foliation. 
\medskip

Consider now the case of a smooth hypersurface $D\subset X$ of a compact hyperk\"ahler manifold. By virtue of Lemma \ref{lem:LA1}, the kernel 
$${\mathcal F}\coloneqq \ker\left(\sigma|_D\colon\!
\xymatrix{{\mathcal T}_D\ar@{^(->}[r]& {\mathcal T}_X|_D\ar[r]^-\sigma& \Omega_X^\ast|_D\ar@{->>}[r]& \Omega_D^\ast}\!\right)\subset{\mathcal T}_D$$
is a sub-line bundle with locally free kernel. It is called the \emph{characteristic foliation} of the hypersurface $D\subset X$ and was first studied by Hwang and Viehweg \cite{HV}.

\begin{lem}\label{lem:Fomega}
The {normal bundle} of the characteristic foliation ${\mathcal N}_{\mathcal F}\coloneqq {\mathcal T}_D/{\mathcal F}$ is naturally endowed with a symplectic structure and
$${\mathcal F}\cong\omega_D^\ast.$$
In particular, any local transverse section $\Sigma$ of a leaf $L\subset D$ has a natural symplectic structure.
\end{lem}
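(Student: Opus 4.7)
The plan is to globalize the linear-algebra statements of Lemmas \ref{lem:LA1} and \ref{lem:LA2} to the fibers of $\mathcal{T}_X|_D$ and then read off all three assertions directly.

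First, I would observe that $\sigma|_D\in H^0(D,\Omega_D^2)$ has, at every $p\in D$, fiberwise kernel exactly $\mathcal{F}_p$, by the very definition of $\mathcal{F}$ together with Lemma \ref{lem:LA1}. Consequently $\sigma|_D$ factors through the quotient $\mathcal{T}_D\twoheadrightarrow\mathcal{N}_\mathcal{F}$ and yields a section $\bar\sigma\in H^0(D,\bigwedge^2\mathcal{N}_\mathcal{F}^\ast)$. By Lemma \ref{lem:LA2} this form is pointwise non-degenerate, hence it is a global symplectic structure on the rank-$(2n-2)$ bundle $\mathcal{N}_\mathcal{F}$, proving the first assertion.

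Next, to identify $\mathcal{F}$, I would take determinants in the defining short exact sequence $0\to\mathcal{F}\to\mathcal{T}_D\to\mathcal{N}_\mathcal{F}\to 0$, which gives $\omega_D^\ast\cong\det\mathcal{T}_D\cong\mathcal{F}\otimes\det\mathcal{N}_\mathcal{F}$. The crucial point is that $\det\mathcal{N}_\mathcal{F}$ is trivial: non-degeneracy of $\bar\sigma$ makes the top wedge $\bar\sigma^{\,n-1}$ a nowhere-vanishing section of $\det\mathcal{N}_\mathcal{F}^\ast$, so $\det\mathcal{N}_\mathcal{F}\cong\mathcal{O}_D$ and $\mathcal{F}\cong\omega_D^\ast$ follows.

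Finally, for a local transverse section $\Sigma$ of a leaf $L$ through a point $p$, transversality together with the dimension count $\dim\Sigma+\mathrm{rk}(\mathcal{F})=\dim D$ yields $\mathcal{T}_\Sigma|_p\oplus\mathcal{F}_p=\mathcal{T}_D|_p$. Thus the projection $\mathcal{T}_D\twoheadrightarrow\mathcal{N}_\mathcal{F}$ restricts to an isomorphism $\mathcal{T}_\Sigma\cong\mathcal{N}_\mathcal{F}|_\Sigma$, and pulling $\bar\sigma$ back along this isomorphism equips $\Sigma$ with a symplectic structure. I do not anticipate any serious obstacle; the entire argument is the pointwise content of Lemmas \ref{lem:LA1}--\ref{lem:LA2} made global, with the only step beyond formal manipulation being the triviality of $\det\mathcal{N}_\mathcal{F}$, which is immediate from a top-wedge (Pfaffian) argument applied to $\bar\sigma$.
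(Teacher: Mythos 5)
Your proposal is correct and follows essentially the same route as the paper: the fiberwise application of Lemmas \ref{lem:LA1} and \ref{lem:LA2} gives the symplectic form $\bar\sigma$ on ${\mathcal N}_{\mathcal F}$, its top wedge trivializes $\det({\mathcal N}_{\mathcal F})$, and taking determinants in $0\to{\mathcal F}\to{\mathcal T}_D\to{\mathcal N}_{\mathcal F}\to0$ yields ${\mathcal F}\cong\omega_D^\ast$, which is exactly the paper's argument spelled out in slightly more detail. The only point worth adding for the transverse section is that the form you obtain on $\Sigma$ is just the restriction $\sigma|_\Sigma$, so its closedness is automatic and non-degeneracy is what your identification ${\mathcal T}_\Sigma\cong{\mathcal N}_{\mathcal F}|_\Sigma$ provides.
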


\begin{proof} The first assertion follows from Lemma \ref{lem:LA2}. The existence of a symplectic structure on ${\mathcal N}_{\mathcal F}$ implies $\det({\mathcal N}_{\mathcal F})\cong{\mathcal O_D}$ and hence
${\mathcal F}\cong \det({\mathcal T}_D)\cong\omega_D^\ast$.
\end{proof}

\begin{remark}
For foliations in general $\det({\mathcal F})^\ast$ is often called the canonical bundle $\omega_{\mathcal F}$ of the foliation. For the characteristic
foliation we thus have $\omega_{\mathcal F}\cong\omega_D$. Note that
for $n=1$, this becomes ${\mathcal F}\cong{\mathcal T}_D$ which is not interesting so that we
usually assume $n>1$. Then, according to \cite[Thm.\ 1.1]{Druel},  $\det({\mathcal F})\cong\omega^\ast_D$ cannot be big and nef.
\end{remark}

The geometric versions of isotropic, coisotropic, and Lagrangian
for subspaces of a symplectic vector space are readily defined:
For example, a subvariety $Z\subset X$ is coisotropic if the rank
of $\sigma|_Z\colon {\mathcal T}_Z\to \Omega_Z$ (say over the smooth
locus of $Z$) is $2\dim(Z)-\dim(X)$ or, equivalently, if ${\rm rk}(\ker(\sigma|_Z))={\rm codim}(Z\subset X)$.

The geometric analogue of Lemma \ref{lem:Folvert} is the following.

\begin{cor}\label{cor:Folvert}
Assume $D\subset X$ is a smooth hypersurface of a compact hyperk\"ahler manifold.

{\rm (i)} If $T\subset X$ is a smooth Lagrangian submanifold that is contained in $D$,  then $T$ is covered by leaves or, equivalently, every leaf $L\subset D$ intersecting $T$ is contained in $T$.

{\rm (ii)} Furthermore, any invariant subvariety $Z\subset X$ of codimension two that is contained in $D\subset X$ is coisotropic.\qed
\end{cor}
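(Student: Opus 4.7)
The plan is to deduce both assertions from Lemma \ref{lem:Folvert} by applying it fiberwise at each tangent space. The notions of invariance, Lagrangian, and coisotropic are all pointwise conditions on tangent spaces, so the geometric statements reduce directly to the linear algebra already in hand.

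For (i), I would fix $x\in T$ and set $W=T_xX$, $V=T_xD$, $U=T_xT$. By the very definition of the characteristic foliation, ${\mathcal F}_x$ is exactly the subspace $F=\ker(\sigma|_V)$ from Lemma \ref{lem:LA1}. Since $T$ is Lagrangian in $X$ and contained in $D$, the tangent space $U$ is Lagrangian in $W$ and contained in $V$, so Lemma \ref{lem:Folvert}(i) yields ${\mathcal F}_x\subset T_xT$ at every $x\in T$. This translates into the sheaf-theoretic inclusion ${\mathcal F}|_T\subset{\mathcal T}_T$ as subsheaves of ${\mathcal T}_D|_T$, which is precisely the definition of $T$ being invariant under the foliation. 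Since every leaf meeting an invariant submanifold is contained in it (a point recalled in Section \ref{sec:fol}) and the leaf through any given point of $T$ is unique, every leaf $L\subset D$ intersecting $T$ lies in $T$, and in particular $T$ is covered by such leaves.

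For (ii), I would restrict to the smooth locus $Z^{\rm sm}\subset Z$, where at each point $z$ one has $T_zZ\subset T_zD\subset T_zX=W$ with $T_zZ$ of codimension two in $W$. The assumption that $Z$ is invariant means exactly that ${\mathcal F}_z=F\subset U:=T_zZ$, which is the hypothesis of Lemma \ref{lem:Folvert}(ii). The lemma then forces $\dim\ker(\sigma|_U)=2$, so ${\rm rk}(\ker(\sigma|_{T_zZ}))={\rm codim}(Z\subset X)$ at every $z\in Z^{\rm sm}$. This is the pointwise condition defining coisotropy, so $Z$ is coisotropic.

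I do not expect any serious obstacle: the corollary is essentially a direct geometric incarnation of Lemma \ref{lem:Folvert}, the only piece of bookkeeping being the equivalence ``$T$ is covered by leaves $\Longleftrightarrow$ every leaf meeting $T$ is contained in $T$'', which follows from the uniqueness of the leaf through each point of an integrable distribution together with the smoothness of $T$.
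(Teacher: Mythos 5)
Your proposal is correct and is exactly the argument the paper intends: the corollary is stated with a \qed precisely because it follows by applying Lemma \ref{lem:Folvert} pointwise to the tangent spaces $T_xT\subset T_xD\subset T_xX$ (resp.\ $T_zZ$ over the smooth locus), together with the definitions of invariance and coisotropy and the fact from Section \ref{sec:fol} that leaves meeting an invariant (closed) submanifold stay inside it. The only cosmetic remark is that the paper's recalled fact says a leaf meeting an invariant subvariety is contained in its \emph{closure}, which coincides with your phrasing since $T$ is a closed submanifold.
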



\section{Preparations II: Space of leaves}\label{sec:Prep2}
There is no standard text on foliations on complex manifolds or algebraic varieties, but
see e.g.\ \cite{CN}. The arguments typically rely very much on the differentiable theory. The holomorphic version of Reeb's classical stability theorem, cf.\ \cite{HV,Kebekus}, is one example.
We recommend \cite[Sec.\ 2.1]{AC} for further comments.

\subsection{} Consider a foliation ${\mathcal F}$ (of rank one) on a compact complex manifold $D$. The space of leaves is the quotient $D/{\mathcal F}$ by the equivalence
relation that identifies two points if they are contained in the same leaf.
The quotient topology is often complicated and frequently non-Hausdorff,
but the projection $\pi\colon D\to D/{\mathcal F}$ is open, i.e.\ for any open set $U\subset D$ the union of all leaves intersecting $U$ (its saturation) is again an open subset. For more information see \cite[Ch.\ III]{CN}.
A typical example is that of a ${\mathbb P}^1$-bundle $\pi\colon D={\mathbb  P}({\mathcal E})\to Z$ with ${\mathcal F}={\mathcal T}_\pi$. In this situation, 
$D/{\mathcal F}=Z$. We will come back to the local structure of $D/{\mathcal F}$ and the map $\pi\colon D\to D/{\mathcal F}$ in the case that the foliation is  \emph{algebraically integrable}, i.e.\ when every leaf is compact.\footnote{ or, equivalently, when it admits one compact leaf with finite holonomy \cite[Thm.\ 1]{Pereira}, cf.\ Definition \ref{def:hol}.}

\subsection{}\label{sec:Sigma}
 Let $L=\bar L\subset D$ be a compact leaf. For a fixed point $x\in L$
we pick a small transversal section $x\in \Sigma_x\subset D$
(think of it as a germ of a transversal section). Consider a closed
loop $\gamma\colon [0,1]\to L$ with $\gamma(0)=\gamma(1)=x$ and pick
a point $y\in \Sigma_x$ close to $x$. Then there exists a differentiable
map $\Phi\colon \Sigma_x\times[0,1]\to X$ such that $\Phi(y,0)=\Phi(y,1)$, $\Phi(0,t)=\gamma(t)$, and
$\Phi(y,0)=y$. The pull-back of the foliation ${\mathcal F}$ defines a real foliation of rank one on $\Sigma_x\times S^1$. 

Starting with a point $(y,0)$ and integrating 
defines a path $\gamma_y\colon [0,1]\to \Sigma_x\times S^1$ satisfying $\gamma_y(t)=(\rho_{\gamma,y}(t),t)$ and $\gamma_y(0)=(y,0)$. Note, however, that this path is not  necessarily closed, so possibly $\gamma_y(1)\ne (y,0)$. 

It turns out that the map
$y\mapsto \rho_{\gamma,y}(1)$
only depends on the homotopy class of $\gamma$, which gives rise to the following.

\begin{definition}\label{def:hol}
The \emph{holonomy} of a compact leaf $L\subset D$ is the group homomorphism
$$\rho\colon\pi_1(L,x)\to {\rm Diff}(\Sigma_x),~\gamma\mapsto (\rho_\gamma\colon y\mapsto \rho_{\gamma,y}(1)).$$
The leaf has \emph{finite holonomy} if the image of $\rho$,  the \emph{holonomy group}  $$G_L\coloneqq {\rm Im}(\rho)\subset{\rm Diff}(\Sigma_x),$$ is finite.
\end{definition}

Note that the image of $\rho$ only depends on $x$ up to conjugation. In particular,
the property of a leaf to have finite holonomy does not depend on the chosen base point.

Since we are interested in foliations of rank one, a compact leaf will be a
compact complex curve. If this curve is rational, i.e.\ $L=\bar L\cong{\mathbb P}^1$, then it has automatically finite (and in fact trivial) holonomy. Also note 
that due to a result of Holmann \cite[Prop.\ 4.2]{Holmann}, one knows that
if all leaves of a foliation on a K\"ahler manifold are compact,
then they all have finite holonomy.\footnote{In fact, Holman \cite[Prop.\ 4.2]{Holmann} proved that for a holomorphic foliation with only compact leaves stability is equivalent to finite holonomy. Earlier results in this direction are due to Epstein \cite{Epstein}.}

\begin{thm}\label{thm:Reeb}
Assume that a foliation ${\mathcal F}$ of rank one on a smooth projective variety $D$ has one leaf isomorphic to ${\mathbb P}^1$. Then ${\mathcal F}$ is algebraically integrable and all leaves are curves isomorphic to ${\mathbb P}^1$.
\end{thm}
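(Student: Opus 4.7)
The strategy is to combine the holomorphic Reeb stability theorem with a Hilbert-scheme argument, promoting one compact $\mathbb{P}^1$-leaf to the entire variety $D$. The main obstacle will be the final step, i.e.\ ruling out reducible, non-reduced or singular rational degenerations of $\mathbb{P}^1$-leaves in the Hilbert scheme.

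First, since $L\cong\mathbb{P}^1$ is simply connected, its holonomy group $G_L$ is trivial, in particular finite. The holomorphic version of Reeb's stability theorem (\cite{HV,Kebekus}) then produces an open saturated neighbourhood $V\subset D$ of $L$ such that $V\to V/\mathcal{F}$ is a proper smooth $\mathbb{P}^1$-fibration; every leaf in $V$ is thus isomorphic to $\mathbb{P}^1$. The locus $U\subset D$ of points lying on such a leaf is therefore a non-empty open subset of $D$.

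Next, I would parametrize these leaves by the Hilbert scheme. Let $T^\circ\subset\mathrm{Hilb}(D)$ be the irreducible component containing $[L]$, so that $V/\mathcal{F}$ sits inside $T^\circ$ as an open subset, and let $T:=\overline{T^\circ}$ be its projective closure. The universal family $\mathcal{U}\to T$ and its evaluation morphism $\mathrm{ev}\colon\mathcal{U}\to D$ are both proper, so $\mathrm{ev}(\mathcal{U})\subset D$ is closed and contains the open set $V$; irreducibility of $D$ then forces $\mathrm{ev}$ to be surjective.

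Finally, one has to verify that every subscheme $C_t$, $t\in T$, is a smooth irreducible $\mathbb{P}^1$ and hence an $\mathcal{F}$-leaf. By construction $C_t$ is a flat limit of $\mathbb{P}^1$-leaves, so a connected $1$-cycle of arithmetic genus $0$ whose reduced components $C_i$ are rational and tangent to $\mathcal{F}$. After normalization $f_i\colon\tilde C_i\to C_i$, the tangency yields an inclusion of line bundles $\mathcal{T}_{\tilde C_i}=\mathcal{O}(2)\hookrightarrow f_i^*\mathcal{F}$ with effective cokernel, so $\mathcal{F}\cdot C_i=\deg f_i^*\mathcal{F}\geq 2$. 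Writing $C_t=\sum m_iC_i$ with $m_i$ the scheme-theoretic multiplicities, and using that the intersection number $\mathcal{F}\cdot C_t=\mathcal{F}\cdot L=2$ is constant in the flat family, the inequality $2=\sum m_i(\mathcal{F}\cdot C_i)\geq 2\sum m_i$ forces $\sum m_i=1$ and $\mathcal{F}\cdot C_i=2$. Hence $C_t$ is reduced, irreducible, and still of arithmetic genus $0$, which forces it to be smooth and isomorphic to $\mathbb{P}^1$. This delivers simultaneously the algebraic integrability of $\mathcal{F}$ and the fact that every leaf is a $\mathbb{P}^1$.
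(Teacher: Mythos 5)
Your proposal is correct in outline, but it follows a genuinely different route from the paper. The paper's proof is a two-line citation argument: by Pereira's global stability theorem \cite{Pereira}, one compact leaf with finite holonomy on a compact K\"ahler manifold forces \emph{all} leaves to be compact with finite holonomy, and holomorphic Reeb stability \cite{HV,Kebekus} then shows all leaves are isomorphic to the given ${\mathbb P}^1$. You only use the local statement (Reeb stability around the rational leaf, whose holonomy is trivial since ${\mathbb P}^1$ is simply connected) and then globalize by algebraic geometry: close up the family of ${\mathbb P}^1$-leaves in the Hilbert scheme, use properness of the evaluation map to cover $D$, and exclude bad degenerations numerically via $\mathcal{F}\cdot L=\deg\mathcal{T}_{{\mathbb P}^1}=2$ together with $\mathcal{F}\cdot C_i\geq 2$ for invariant rational components. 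This is very much in the spirit of Bogomolov--McQuillan and Kebekus--Sol\'a Conde--Toma \cite{BMcQ,Kebekus}; it buys independence from Pereira's global stability theorem, at the price of using projectivity (the Hilbert scheme), whereas the citation route works on arbitrary compact K\"ahler manifolds.

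Two steps need tightening. First, the identification of $V/\mathcal{F}$ with an open subset of a Hilbert scheme component is not automatic as stated; it holds because the normal bundle of a leaf in $V$ is the pullback of the tangent space of the leaf space under the submersion $V\to V/\mathcal{F}$, hence trivial, so the Hilbert scheme is smooth of dimension $\dim D-1$ at $[L]$ and the classifying map is an open immersion -- or simply define $T$ as the closure of the locus parametrizing ${\mathbb P}^1$-leaves, which is all you use. Second, and more seriously, flat limits of smooth rational curves in the Hilbert scheme can acquire embedded points (e.g.\ twisted cubics degenerating to a nodal plane cubic with an embedded point), so ``$\sum m_i=1$ implies $C_t$ reduced'' does not follow, and then ``reduced irreducible of arithmetic genus $0$ implies smooth ${\mathbb P}^1$'' cannot be applied to the subscheme directly. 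The gap is repairable: from $\sum m_i=1$ you know the support $C$ is a single reduced invariant rational curve with $\mathcal{F}\cdot C=2$; since $\mathcal{F}$ is a \emph{regular} rank-one foliation, every branch of $C$ at a point coincides with the unique smooth leaf germ there, so $C$ has one smooth branch at each point, i.e.\ $C$ is smooth, is a compact leaf, and $\deg\mathcal{T}_C=\mathcal{F}\cdot C=2$ gives $C\cong{\mathbb P}^1$; if you also want the subscheme $C_t$ to be reduced, $\chi(\mathcal{O}_{C_t})=1=\chi(\mathcal{O}_C)+\mathrm{length}(K)$ then kills the embedded part $K$. Alternatively, run the whole argument in the Chow (Barlet) space, where embedded points never appear. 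Finally, the facts that the components of the limit are invariant (tangency to $\mathcal{F}$ is a closed condition, checked in a foliation chart where nearby leaves are unions of plaques) and rational (components of limits of rational curves are rational) are standard but deserve a line or a citation each.
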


\begin{proof}
According to a result of Pereira \cite[Thm.\ 1]{Pereira}, for a foliation on a compact K\"ahler manifold  the existence of one compact leaf with finite holonomy  implies that all leaves are compact with finite holonomy.  This proves that ${\mathcal F}$ is algebraically integrable. Reeb stability \cite[Prop.\ 2.5]{HV} then implies that all leaves are isomorphic to each other.\footnote{A word on the name `Reeb stability': A leaf $L$ is stable if every open neighbourhood $L\subset U$ of it contains an invariant open neighbourhood $L\subset U'\subset U$. The foliation is stable if all leaves are stable. Reeb stability in the holomorphic context as in \cite{HV,Kebekus} can be viewed as saying that compact leaves with finite holonomy are stable.}
\end{proof}

\subsection{} Let us come back to the space of leaves $D/{\mathcal F}$ 
and the map $\pi\colon D\to D/{\mathcal F}$
for an algebraically integrable foliation (of rank one) on a smooth projective variety $D$. We collect the facts
that will be used at various places later:\footnote{These results seem well
known to the experts but we could not find a source with complete proofs.
Thanks to J.-B.\ Bost for an instructive email exchange.}
\smallskip

$\bullet$ The map $\pi\colon x\mapsto |G_{L_x}|\cdot [L_x]$ defines a holomorphic
map from $D$ to the Chow variety (or Barlet space), cf.\ 
\cite[Prop.\ 2.5]{HV} of \cite{MM03}. Here, $L_x$ is the unique leaf through $x$ and $G_{L_x}$ is its holonomy group. \smallskip 

$\bullet$ The quotient $D/{\mathcal F}$ can be identified with (the normalization of) the image of $\pi$. In particular, $D/{\mathcal F}$ is an algebraic variety
and $\pi\colon D\to D/{\mathcal F}$ is a proper morphism. The map is in general
not flat. Indeed, by `miracle flatness', the flatness of the equidimensional morphism $\pi$ is equivalent to the smoothness of the leaf space $D/{\mathcal F}$, which
is discussed next.
\smallskip

$\bullet$ Assume $x\in \Sigma_x\subset D$ is a transversal section of a leaf $x\in L$ as in Section \ref{sec:Sigma}. Then locally $\Sigma_x/G_x$ is a chart of $D/{\mathcal F}$ at the point corresponding to the leaf $L_x$, cf.\ \cite[Thm.\ 2.4]{HV}. In particular,
$D/{\mathcal F}$ has quotient singularities.\smallskip

$\bullet$ Assume the fibres of $D\to D/{\mathcal F}$ are rational curves. 
 Then the description of local charts shows
that $D/{\mathcal F}$ is a smooth projective variety, for in this case
$\pi_1(L_x)=\{1\}$ and hence $G_{L_x}=\{1\}$. In the differentiable setting this is \cite{He60} and for a discussion in our setting see e.g.\ \cite[Lem.\ 5]{Sawon}.
If, furthermore, ${\mathcal F}$ is the characteristic foliation of a smooth hypersurface
in a projective hyperk\"ahler manifold, then $D/{\mathcal F}$ comes with a natural symplectic structure.\smallskip

$\bullet$ For a dense open subset of points $x\in D$ the leaf $L_x$ through $x$ has trivial holonomy $|G_{L_x}|=1$, cf.\ \cite{EMT77}.\smallskip

$\bullet$ The scheme-theoretic fibre of $\pi\colon D\to D/{\mathcal F}$ over
a point $[L]\in D/{\mathcal F}$ corresponding to a leaf with trivial holonomy
 $|G_L|=1$ is the leaf $L$. The fibre is non-reduced over points with non-trivial holonomy; more precisely, it is a multiple fibre with multiplicity $|G_L|\ne 1$.





\subsection{}
It is easy to prove that a smooth curve $C\subset S$ in a K3 surface with $(C.C)\geq 0$ is nef. The following is the hyperk\"ahler analogue of
this fact.\footnote{We wish to thank R.\ Abugaliev for communicating this result to us. It seems known to some experts, but has not been written down anywhere.}

\begin{prop}\label{prop:Abtrick}
Let $D\subset X$ be a smooth hypersurface in a projective hyperk\"ahler manifold $X$. If
$q(D)\geq 0$, then $D$ is nef.
\end{prop}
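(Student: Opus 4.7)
\emph{Plan for the proof.} We argue by contradiction. Suppose $D$ is not nef; then there is an irreducible curve $C\subset X$ with $D\cdot C<0$, and since $D$ is effective and irreducible, $C\subset D$. The aim is to derive a contradiction with the hypothesis $q(D)\geq 0$.

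I would reduce to the case where $D$ is uniruled, and then invoke the implication $(\mathrm{iv})\Rightarrow(\mathrm{iii})$ of Case~(1) of the main table (proved in \S\ref{sec:1iv-iii} after Druel \cite{Druel} and Amerik--Campana \cite{AC}) to conclude $q(D)<0$, contradicting the hypothesis. For the reduction to uniruledness: on a projective hyperk\"ahler manifold the structure of the Mori cone $\overline{NE}(X)$ is controlled by MBM classes (Markman, Amerik--Verbitsky, Bayer--Hassett--Tschinkel), so an extremal ray $R$ of $\overline{NE}(X)$ on which $D$ is negative is spanned by the class of a rational curve $C_0\subset X$. Necessarily $C_0\subset D$ by effectivity of $D$, and since $K_X=0$, the curve $C_0$ has unobstructed deformations of expected dimension $2n$, hence moves in a family $\{C_t\}$ of dimension $\geq 2n-3$ by standard deformation theory. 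The intersection number $D\cdot C_t=D\cdot C_0<0$ is constant along the family, so each $C_t\subset D$. The Mori contraction $\phi_R$ associated to $R$ has (in the divisorial case) exceptional divisor $E$ covered by $\{C_t\}$; the inclusion $\{C_t\}\subset D$ then gives $E\subset D$, and since $D$ and $E$ are both irreducible of codimension one we conclude $E=D$, so $D$ is uniruled.

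The main obstacle is excluding the ``small contraction'' alternative, in which the deformations of $C_0$ cover only a codimension-two subvariety $V\subsetneq D$ rather than all of $D$; this case is not immediately handled by the Case~(1) implication since $D$ need not be uniruled. Ruling it out is where the smoothness of $D$ enters essentially: one works with the BB-dual class $\alpha_R\in \mathrm{NS}(X)$ of the ray $R$, which satisfies $q(\alpha_R)<0$ and $D\cdot C_0=q(D,\alpha_R)$ up to a positive scalar, and uses the Hodge-index interplay between $q(D)\geq 0$, $q(\alpha_R)<0$, and the geometry of a smooth hypersurface meeting a small exceptional locus in $X$. Once the small-contraction case is excluded the argument above closes, establishing the proposition.
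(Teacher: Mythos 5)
Your reduction to the divisorial case is plausible (an extremal ray on which the effective divisor $D$ is negative is, e.g.\ by the cone theorem for the klt pair $(X,\epsilon D)$, spanned by a rational curve and contractible, and all contracted curves lie in $D$, forcing $\mathrm{Exc}(\phi_R)=D$ when it is a divisor), but the proof has a genuine gap exactly where you flag it: the small-contraction case is not handled, and the sketch you give for it cannot work as stated. There is no purely numerical contradiction between $q(D)\geq 0$, $q(\alpha_R)<0$ and $q(D,\alpha_R)<0$ on a lattice of signature $(1,\rho-1)$ (e.g.\ $q=x^2-y^2$, $D=(1,0)$, $\alpha_R=(-\epsilon,1)$), so ``Hodge-index interplay'' alone proves nothing; some geometric input about a smooth hypersurface containing the codimension-two exceptional locus is indispensable. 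The natural such input in this paper --- that an invariant codimension-two subvariety of $D$ is coisotropic (Corollary \ref{cor:Folvert}) and that $D$ cannot be covered by, or more precisely cannot behave like a divisor containing, such loci (Lemma \ref{lem:hardLefschetz}, Proposition \ref{prop:Abu2}) --- is only available for $D$ big and nef, i.e.\ it presupposes the very nefness you are trying to prove, so that route is circular. As it stands, the case where the deformations of $C_0$ sweep out only a codimension-two subvariety of $D$ (which does occur on hyperk\"ahler manifolds, e.g.\ a plane $\mathbb{P}^2$ in a fourfold) is simply left open.

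For comparison, the paper's proof avoids the Mori-cone reduction and the case distinction altogether: from $D\cdot C<0$ one gets $C\subset D$ and $\deg(\omega_D|_C)<0$, hence by Lemma \ref{lem:Fomega} the characteristic foliation satisfies ${\mathcal F}|_C\cong\omega_D^\ast|_C$ ample; the Bogomolov--McQuillan/Bost theorem (in the form of \cite{Kebekus}) then makes all leaves algebraic and rationally connected, so all leaves are rational curves and $D$ is uniruled, whence $q(D)<0$ by the (independent) argument of Section \ref{sec:1iv-iii}, contradicting $q(D)\geq 0$. Note that this works for an arbitrary $D$-negative curve $C$, with no need for $C$ to be extremal or rational and no need for the existence of a contraction; if you want to salvage your approach, you would have to supply an actual argument excluding the small-contraction alternative, and the only one visible here is essentially the foliation argument the paper uses.
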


\begin{proof}
Assume $D$ is not nef. Then there exists an irreducible
curve $C\subset X$ with $D\cdot C<0$. The latter implies $C\subset D$ and
$\deg(\omega_D|_C)<0$, which by Lemma \ref{lem:Fomega} shows $\deg({\mathcal F}|_C)>0$, i.e.\ ${\mathcal F}|_C$ is ample.

However, the latter implies that the foliation ${\mathcal F}$ is algebraic, i.e.\ all leaves are compact or, equivalently, algebraic curves. This is a consequence of a much more general result that was original proved by Bogomolov--McQuillan \cite{BMcQ} and Bost \cite{Bost} with details provided by Kebekus, Sol\'a Conde, and Toma \cite[Thm.\ 1 \& 2]{Kebekus}: If the restriction of a foliation
to some complete curve $C$ is an ample vector bundle, then the leaf through any point of $C$ is algebraic. Moreover, the leaf through a general point of $C$ is rationally connected and, in fact, all leaves are rationally connected. In fact, according to Theorem \ref{thm:Reeb}, all we need is one compact rational leaf.\footnote{Note that in this sense Reeb stability shows that the ampleness along $C$ determines the behavior of the foliation globally.}

In our situation the result means that all leaves of ${\mathcal F}$ are smooth rational curves and, in particular, $D$ is uniruled. Then, the discussion in Section \ref{sec:1iv-iii}, which is independent of this proposition, leads to the contradiction $q(D)<0$.
\end{proof}

%
\section{Case (1): Closed leaves}\label{sec:Case1}

We are proving the equivalence of the conditions (i)-(iv) in Case (1). All results are unconditional. The main reference for this section is \cite{AC} with priori work 
\cite{Boucksom,HuyKcone}.

\subsection{}\label{sec:1i-iv} (i) $\Rightarrow$ (iv):  We assume that the leaves
of the foliation have one-dimensional closures and want to show that $D$ is then uniruled.\footnote{This part is the most technical one of all of this survey and we will have to be sketchy at points.}

First of all, since the boundary $\bar L\setminus L$ is invariant, it is a union of leaves. However, under our assumption all leaves are one-dimensional and,
therefore, all leaves are in fact closed $\bar L=L$. 
Thus, all leaves are algebraic curves, i.e.\ ${\mathcal F}$ is algebraically integrable, and the natural projection
$$\pi\colon D\to D/{\mathcal F}$$
is a proper  morphism between algebraic varieties.

		
\smallskip

The proof proceeds in three steps.

\begin{description}
\item[{\bf Step 1}] Prove that $\pi$ has no multiple fibres in codimension one and that the canonical divisor of $D/\mathcal{F}$ is trivial.
\item[{\bf Step 2}] Deduce the isotriviality of $\pi$, combining results of Miyaoka and Hwang--Viehweg, and consider a finite quasi-\'etale cover of $D$ that splits into a product.
\item[{\bf Step 3}] Reach a contradiction by considering the numerical dimension of $\omega_D$.
\end{description}

\medskip

\noindent
{\bf Step 1.} Intuitively, the morphism $\pi\colon D\to D/{\mathcal F}$ induced by the algebraically integrable foliation $\mathcal{F}$ contracts all
curves with tangent space contained in the kernel of $\sigma|_D$. Therefore $\sigma|_D$ should descend to a non-degenerate two-form on the a priori singular
space $D/{\mathcal F}$ and so we expect (the smooth part of) $D/{\mathcal F}$ to have trivial canonical bundle.

For the open set covered by leaves with trivial holonomy this can be made rigorous 
by taking a locally transverse section $\Sigma$ to each leaf of the foliation, which can be taken as a local model for $D/{\mathcal F}$ near the point corresponding to the leaf.
 Then, by Lemma \ref{lem:Fomega}, $\sigma|_\Sigma$ is symplectic. These symplectic forms glue to a global symplectic form on the open subset of $D/{\mathcal F}$ corresponding to leaves with trivial holonomy, see \cite[Lem.\ 6]{Sawon} for some more details.
	
	Looking at the local behavior of the symplectic form around the multiple fibres, Amerik and Campana \cite{AC} proved the following:
	
\begin{lem}[Amerik--Campana]
The map $\pi$ has no multiple fibres in codimension one. Moreover some multiple of the canonical bundle of $D/\mathcal{F}$ is trivial.
\end{lem}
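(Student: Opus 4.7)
The plan is to show that the holonomy of every leaf of $\mathcal{F}$ acts symplectically on its transverse section. Both conclusions of the lemma will then follow, because the fixed locus of a non-trivial symplectic automorphism has even codimension and in particular codimension at least two.

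First I would make precise the claim that the holonomy is symplectic. For $L=\bar L$ a compact leaf with base point $x\in L$, the transverse section $\Sigma_x$ carries the symplectic form $\sigma|_{\Sigma_x}$ provided by Lemma~\ref{lem:Fomega}. Since the symplectic structure on $\mathcal{N}_{\mathcal F}=\mathcal{T}_D/\mathcal{F}$ is a global holomorphic datum along $L$, and the holonomy is built by parallel transport of the foliation, its linearisation at $x$ maps $G_L$ into $\mathrm{Sp}(T_x\Sigma_x,\sigma_x)$. A standard computation then shows that for any $g\ne \mathrm{id}$ in $\mathrm{Sp}$ the fixed subspace $\ker(g-\mathrm{id})$ is $\sigma$-orthogonal to the image of $g-\mathrm{id}$; hence it is itself symplectic and therefore of even codimension, i.e.\ of codimension at least two.

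Next I would deduce the absence of multiple fibres in codimension one from the local model. By the last bullet of Section~\ref{sec:Prep2}, a chart for $D/\mathcal{F}$ near $[L]$ is given by $\Sigma_x/G_L$, and for a nearby leaf $L_y$ corresponding to $y\in\Sigma_x$ one has $G_{L_y}=\mathrm{Stab}_{G_L}(y)$; in particular the multiplicity $|G_{L_y}|$ of the scheme-theoretic fibre at $[L_y]$ is $>1$ precisely when $y$ lies in the fixed locus of some non-trivial $g\in G_L$. By the previous step each such fixed locus has codimension $\geq 2$ in $\Sigma_x$, and consequently the locus of multiple fibres has codimension $\geq 2$ in $D/\mathcal{F}$. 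The same argument, applied to the linearised action of $G_L$ on $T_x\Sigma_x$, shows that $D/\mathcal{F}$ is smooth outside a closed subset of codimension $\geq 2$, since symplectic quotient singularities occur only in codimension $\geq 2$.

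For the second assertion I would let $U\subset D/\mathcal{F}$ denote the smooth locus and note that $U$ carries the symplectic form $\bar\sigma$ constructed above. The $(n-1)$-th wedge power $\bar\sigma^{\wedge(n-1)}$ is a nowhere vanishing section of $\omega_U$, trivialising it there. Since $D/\mathcal{F}\setminus U$ has codimension $\geq 2$ and $D/\mathcal{F}$ is normal, this section extends to a nowhere vanishing reflexive section of $\omega_{D/\mathcal{F}}$, so $K_{D/\mathcal{F}}=0$ as a Weil divisor. The quotient singularities of $D/\mathcal{F}$ are $\mathbb{Q}$-factorial, hence $K_{D/\mathcal{F}}$ is $\mathbb{Q}$-Cartier and some positive multiple $mK_{D/\mathcal{F}}$ is Cartier and trivial as a line bundle. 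I expect the main obstacle to be the first paragraph: the symplecticity of the holonomy rests on the fact that $\sigma|_{\mathcal{N}_{\mathcal{F}}}$ is a globally defined 2-form that is flat along leaves for the Bott connection, and this geometric input is precisely what makes the rest of the proof a routine application of symplectic linear algebra and standard extension properties of differential forms on normal varieties.
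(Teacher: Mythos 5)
Your argument is correct in substance, but it takes a genuinely different route from the paper. The paper works directly with the descended top form: over the good locus one has $\sigma^{n-1}=\pi^\ast\alpha$, and a local coordinate computation along a hypothetical divisor of multiple fibres of multiplicity $m>1$ (where $\pi$ looks like $(u,s,w)\mapsto (u,s^m)$) shows that the coefficient of $\alpha$ would have a pole of fractional order $1-1/m$, which is impossible; the same computation shows that $\alpha$ is holomorphic and non-vanishing off a set of codimension two, which gives the statement about the canonical bundle. You instead show that the holonomy of each compact leaf acts symplectically on a transversal and that the fixed locus of a non-trivial element has even, hence at least two, codimension; combined with the local chart $\Sigma_x/G_L$ and the identification of multiple fibres with leaves of non-trivial holonomy, this yields both assertions, with the canonical bundle handled by $\bar\sigma^{\,n-1}$ on the smooth locus plus reflexivity and ${\mathbb Q}$-Gorensteinness of quotient singularities. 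Your route is more conceptual (it explains the even codimension of the bad locus), but it leans on the facts about the leaf space listed in Section \ref{sec:Prep2} (chart $\Sigma_x/G_{L}$, multiplicity $=|G_{L_y}|$, $G_{L_y}\cong\mathrm{Stab}_{G_L}(y)$), for which the paper itself admits there is no complete reference, whereas the paper's computation only needs the local normal form of $\pi$ near a generic point of the putative divisor.

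Two points need tightening. First, the linear-algebra claim as you state it is false for arbitrary $g\neq\mathrm{id}$ in $\mathrm{Sp}$: a symplectic transvection fixes a hyperplane. The orthogonality of $\ker(g-\mathrm{id})$ and $\mathrm{im}(g-\mathrm{id})$ only gives non-degeneracy on the fixed space once these subspaces meet trivially, i.e.\ for semisimple $g$. This is harmless here because $G_L$ is finite -- all leaves are compact, so by Holmann \cite{Holmann} (or Pereira \cite{Pereira}) every leaf has finite holonomy -- so every $g$ has finite order; but finiteness must be invoked explicitly, and it is also what legitimizes the chart $\Sigma_x/G_L$ you use. Second, the symplecticity of the holonomy does not follow merely from $\bar\sigma$ on ${\mathcal N}_{\mathcal F}$ being ``a global holomorphic datum''; the essential input is that $\sigma$ is closed and ${\mathcal F}=\ker(\sigma|_D)$, so that for any local vector field $v$ tangent to ${\mathcal F}$ one has $L_v(\sigma|_D)=d\iota_v(\sigma|_D)+\iota_v d(\sigma|_D)=0$, whence the sliding maps between transversals that define the holonomy preserve the induced form of Lemma \ref{lem:Fomega}. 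This is exactly the content of \cite[Lem.\ 6]{Sawon}, which the paper cites, so a one-line Cartan-formula argument or that reference closes the gap.
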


\begin{proof}
{ Suppose by contradiction that there exists a divisor $V \subset D/\mathcal{F} $ such that the fibres over $V$ are multiple of order $m>1$. The statement is local around a generic point $0\in V$. We can take a local multisection $W$ over 0 that meets transversally the non-reduced fibres. We choose coordinates $(z, t)$ around $0\in V$ such that $z$ are coordinates for $V$ and $t$ parametrizes the normal direction. 
Thus, we can choose coordinates $(u,s,w)$ around $W$ in such a way that $W$ is given by the equation $w=0$ and $\pi(u,s,w)=(u, s^m)$.
	
By the discussion above, $\sigma^{n-1}=\pi^*\alpha$ for some form of top degree on the base at least over the complement of $V$.
Locally, $\alpha=G(z,t) \cdot dz \wedge dt$, where $dz$ is an $n-2$ form,
and then $|G(z,t)|=e^{g(z, t)}\cdot |t|^{-c}$ for some real-valued bounded function 
$g$. We claim that $c=1-1/m$. 
Assuming for the moment that this is true, then the meromorphic function
$G(z,t)$ has poles of order strictly less than one around $t=0$, which is absurd. 
	
To prove the claim we denote by $\pi_0$ the restriction of $\pi\colon D \rightarrow D/\mathcal{F}$ to $W$. In coordinates $\pi_0(u,s,w)= (u, s^m)$. By the base change formula, we see that the restriction of $\sigma^{n-1}$ to $W$ is
$$\sigma^{n-1}|_W= \pi_0^* \alpha = G(u,s^m) \cdot m \cdot s^{m-1} \cdot du\wedge ds=h( u, s) \cdot du\wedge ds$$ for some function $h(u,s)$ that does not vanish when $s=0$.
Thus, we can write 
$$|G(z, t)|=|G( u, s^m)|=\frac{|h(u, s)|}{m} |s|^{1-m} =e^{g(u, s)} |s|^{1-m}=e^{g(z, t)}|t|^{-1+ 1/m}  $$
which proves the claim.}
\end{proof}

The singular fibres of $\pi\colon D\to D/{\mathcal F}$ are simply multiples of smooth curves. By the above lemma we can assume $\pi$ is smooth over the complement $D^o/{\mathcal F}\subset D/{\mathcal F}$ of a closed set
of codimension two and we may assume that $D^o/{\mathcal F}$ is smooth. Denote by $\pi^o\colon D^o \rightarrow D^o/\mathcal{F}$ the restriction of $\pi$.
	If one leaf is rational, then by Reeb stability, see Theorem \ref{thm:Reeb}, all the leaves are rational curves and we are done.\footnote{One can avoid using Reeb stability here: Instead of showing (i) $\Rightarrow$ (iv) one shows (i) $\Rightarrow$ (iii) and then uses Section \ref{sec:1iii-iv} to complete by (iii) $\Rightarrow$ (iv). Indeed, $q(D)<0$ is equivalent to $\int_D{\rm c}_1({\mathcal F})H^{2n-2}>0$ for some ample divisor $H$ on $D$. The latter follows if one can show $\int_D{\rm c}_1({\mathcal F})\pi^\ast H_0^{2n-2}>0$ for some ample 
divisor $H_0$ on $D/{\mathcal F}$, which in turn would follow
from $\int_L{\rm c}_1({\mathcal F})|_L>0$, i.e.\ $g(L)=0$.}
 So we can assume all the leaves have positive genus and singular, i.e.\ multiple, fibres appear in codimension at least two. 
\medskip

\noindent
{\bf Step 2.} We want to prove that $\pi^o\colon D^o\to D^o/{\mathcal F}$ is isotrivial.
	There are two possibilities depending on the genus $g$ of the general leaf: If $g=1$, then the fibration has to be isotrivial, for otherwise one of the fibres of $\pi$ would be rational, in which case we are done already, cf.\ Theorem \ref{thm:Reeb}.

The isotriviality is less trivial  for $g>1$. It follows from the observation that the following results of Miyaoka--Mori and Hwang--Viehweg contradict each other.
\smallskip

$\bullet$
For any coherent subsheaf $\mathcal{H} \subset \Omega_{D^o/\mathcal{F}} $ one has $\kappa(D^o/\mathcal{F}, \det(\mathcal{H}))\leq 0$. Indeed, according to 
\cite[Cor.\ 8.6]{Miyaoka} or \cite[Thm.\ 1]{MM}, the restriction of
$\Omega_{D^o/{\mathcal F}}$  to a generic complete intersection curve is semi-positive. At the same time, its determinant is trivial. Hence, all sub-sheaves 
of $\Omega_{D^o/{\mathcal F}}$ have
non-positive degree, which leads to the assertion.

\smallskip

$\bullet$ Assuming $g>1$, there exists a coherent subsheaf $\mathcal{H} \subset \Omega_{D^o/\mathcal{F}}$ such that ${\rm var}(\pi^o) =\kappa(D^o/\mathcal{F},\mathcal{H})$, cf.\ \cite[Thm.\ 3.2 \& Prop.\ 4.4]{HV}. Roughly, the relative cotangent sheaf of the natural map $D^o\to {\mathcal M}_g$ provides this sub-sheaf. (Strictly speaking, this is only true after passing to a finite cover of $D^o$ which does not affect the argument.)

\medskip

Once isotriviality for $g=1$ and $g>1$ has been established, one can use the
fact that the moduli space ${\mathcal M}_g(N)$ of curves with a level $N$-structures, $N\geq 3$, is fine to show that there exists a finite \'etale cover $\Delta\to D^o/{\mathcal F}$ such that pull-back $\tilde D\coloneqq D^o\times_{D^o/{\mathcal F}}\Delta$ splits
 as $\tilde{D}\cong  \Delta \times C$, where $C$ is the generic fibre of $\pi$.  Indeed, there exists a finite \'etale cover $\Delta\to
 D^o/\mathcal{F}$ such that the finite cohomology groups
  $H^1(\tilde D_t,\ZZ/N\ZZ)$, $t\in \Delta$, of the fibres of the pull-back family 
  $\tilde\pi^o\colon\tilde D\coloneqq D^o\times_{D^o/{\mathcal F}}\Delta\to \Delta$
 form a trivial local system. The induced morphism $\Delta \to {\mathcal M}_g(N)$
 has the property that the pull-back of the universal curve over ${\mathcal M}_g(N)$ is isomorphic to  $\tilde\pi^o$. However, the isotriviality implies that $\Delta\to {\mathcal M}_g(N)$ is  constant and, therefore, $\tilde D$ splits as claimed. 
 \medskip

\noindent
{\bf Step 3.} 
Since $D^o/{\mathcal F}$ has trivial canonical bundle, the same holds
for $\Delta$. Hence, 
$$\nu (\tilde D,\omega_D)=\kappa(\tilde D)=\begin{cases}0& \text{ if }g=1\\
1& \text{ if }g>1.\end{cases}$$
As the numerical (and also the Kodaira) dimension is preserved under \'etale maps, one finds
$$\nu(D,\omega_D)=\begin{cases} 0&\text{  if }g=1\\
1&\text{ if }g>1.\end{cases}$$

Since $\omega_D=\mathcal{O}_X(D)|_D$, we have $\nu(D,\omega_D)=\nu(X, D)-1$. 
However,  the numerical dimension of a nef divisor in a hyperk\"ahler manifold can be $0, n$ or $2n$. Since $n>1$, the only possibility is that $n=2$ and $g>1$, which is excluded as follows: A fibre $S$ of the canonical map is equivalent as a cycle, up to a multiple, to $D\cdot D$. This means that $S$ is Lagrangian, for $\int_S \sigma \bar{\sigma}= q(D)=0$. Hence,
by Corollary \ref{cor:Folvert},  the leaves of the characteristic foliation must be contained in $S$ and induce a fibration on $S$ of curves of genus at least two. This contradicts the fact that the canonical bundle of $S$ is trivial.

\subsection{}\label{sec:1iv-iii} (iv) $\Rightarrow$ (iii): 
We assume that $D$ is uniruled and will show $q(D)<0$ by excluding
$q(D)>0$ and $q(D)=0$.

Suppose $q(D)>0$. Then $D$ is contained in the interior of the positive cone
and, therefore, also in the interior of the pseudo-effective cone. Hence, $D$ is big
\cite[Lem.\ 2.2.3]{Laz}, i.e.\ $h^0(X,{\mathcal O}(kD))\sim k^{2n}$, which implies $h^0(D,\omega_D^k)\sim k^{2n-1}$ contradicting the assumption that $D$ is uniruled.

Suppose $q(D)=0$. If $D$ is nef, then $\omega_D$ is nef too, which again would contradict the assumption that $D$ uniruled. If $D$ is not contained in the closure of the movable cone, then, by Boucksom's duality of movable and pseudo-effective cone \cite{Boucksom}, it is contained in the interior of the pseudo-effective cone and one argues as above. If $D$ is contained in the boundary of the movable cone, $D$ is the limit of movable divisors and hence its restriction to $D$
is still a limit of effective divisors. However, this implies that 
$\omega_D\cong {\mathcal O}(D)|_D$ is pseudo-effective which contradicts $D$
uniruled.\footnote{We wish to thank R.\ Abugaliev for his help with this argument.}
\smallskip

The discussion should be compared to the result \cite[Thm.\ 3.7]{Loray} asserting
in the general setting that $D$ is uniruled if and only if $\omega_{\mathcal F}$ is not pseudo-effective. The above discussion can be interpreted as saying that any smooth divisor $D\subset X$ with $q(D)\geq 0$ has a pseudo-effective $\omega_D$, thus $D$ cannot be uniruled.

\subsection{}\label{sec:1iii-iv} (iii) $\Rightarrow$ (iv): We assume $q(D)<0$ and want to show that $D$ is then uniruled.  (The smoothness of $D$ is not essential.)
We offer two proofs.
\smallskip

First, it is known that prime exceptional divisors are uniruled \cite[Prop.\ 5.4]{HuyKcone} or \cite[Prop.\ 4.7 \& Thm.\ 4.3]{Boucksom}. Indeed, since the positive cone is self-dual, it contains a class $\alpha$ such that $q(\alpha, D)<0$.
	Hence, there exists a bimeromorphic map between hyperk\"ahler manifolds $f\colon X\dashrightarrow X'$ such that $f_* \alpha =\omega'+\sum a_i D_i'$ for some  uniruled divisors $D_i'$, positive real numbers $a_i$, and a K\"ahler class $\omega'$, cf.\  \cite{HuyKcone} or \cite[Thm.\ 4.3 (ii)]{Boucksom}.
	Since the quadratic form is preserved by $f$, we have $0>q(\alpha, D)=q(\omega'+\sum a_i D_i',f_*D)>\sum a_i q( D_i',f_*D)$ and hence for some $i$ we have $q(D_i', f_* D)<0$. This implies that $f_*D$ and $D_i $ coincide and that in particular $D$ is uniruled since its push-forward in $X'$ is. 
\smallskip

Here is a more direct proof relying on the criterion for uniruledness by Miyaoka
and Mori \cite{MM, Miyaoka}: A smooth projective variety $Z$ of dimension $d$ is uniruled if $\int_Z{\rm c}_1(\omega_Z)\cdot H_Z^{d-1}<0$ for an ample divisor $H_Z$ on $Z$.
Applied to $Z=D$ and observing that $q(D)<0$ implies 
$\int_D{\rm c}_1(D)\cdot H|_D^{2n-2}=\int_X[D]^2\cdot H^{2n-2}<0$
for any ample divisor $H$ on $X$, it implies the result.

\subsection{}\label{sec:1iv-i} (iv) $\Rightarrow$ (i): We assume that
$D$ is uniruled  and want to prove that the leaves are closed.

By assumption, there exists a dominant map $\xymatrix@C=15pt{\varphi\colon{\mathbb P}^1\times V\ar@{..>}[r]& D}$ with $\dim (V)=2n-2$. Since ${\mathbb P}^1$ admits no non-trivial forms of degree one or two, the pull-back of $\sigma$ to ${\mathbb P}^1\times V$ is the pull-back of a holomorphic form on $V$. This readily shows that all $\varphi_t({\mathbb P}^1)\subset D$ are invariant with respect to the foliation. Hence, the generic leaf is of this form, which proves the claim. See \cite[Prop.\ 4.5]{Druel2} for a generalization to singular uniruled divisors.


\subsection{}\label{sec:1i--ii} (i) $\Leftrightarrow$ (ii): Clearly, (ii) implies (i).
 The converse is part of the discussion in Section \ref{sec:1i-iv}.

\section{Case (2): Lagrangian fibrations}\label{sec:Case2}
The equivalence of the conditions (i)-(iv) in Case (2) 
is shown in dimension four. In higher dimensions, the proof assumes
that abundance holds for $D$. The direction (iv) $\Rightarrow$ (i)  is due to Abugaliev and
is the main result of \cite{Ab1}.

\subsection{}\label{sec:2i-iii} (i) $\Rightarrow$ (iii):  We assume $\dim\bar L=n$ and want to exclude $q(D)<0$ and $q(D)>0$.

According to Section \ref{sec:Case1}, $q(D)<0$ implies  $\dim\bar L=1$. To exclude $q(D)>0$, use that according to Section \ref{sec:3iii-i}\footnote{We leave it to the reader to check that the argument is not circular.}
it would imply that the leaves are dense. 

\subsection{}\label{sec:2iii-iv} (iii) $\Rightarrow$ (iv): We assume $q(D)=0$. Then, by Proposition \ref{prop:Abtrick},  $D$ and hence $\omega_D\cong{\mathcal O}(D)|_D$ are nef. Assuming the abundance conjecture for $D$, we know that $\omega_D$ is semi-ample. Hence, 
by \cite[Cor.\ 1.8]{Dem} also $D$\label{foot:DHP} is semi-ample,\footnote{This is a highly non-trivial statement asserting that $H^0(X,{\mathcal O}(kD))\to H^0(D,{\mathcal O}(kD)|_D)$ is surjective for sufficiently divisible $k$. For an alternative, algebraic argument see \cite[Cor.\ 5.2]{AC}.} i.e.\ some power ${\mathcal O}(kD)$ defines a Lagrangian fibration $f\colon X\to B$
and, therefore, $kD$ is the pull-back of a divisor in $B$. Hence, $D$ is vertical.

\begin{remark}
The implication (i) $\Rightarrow$ (iv) in dimension four was first proved by
Amerik and Guseva \cite{AG}.
\end{remark}

\subsection{}\label{sec:2iv-iii} (iv) $\Rightarrow$ (iii): We assume now that
there exists a Lagrangian fibration $X\to B$ such that $D$ is the pre-image (as a set) of a hypersurface $H\subset B$. Then $[D]$ and $f^\ast[H]$ are proportional. Therefore, 
since $[H]^{n+1}=0$, also $[D]^{n+1}=0$ and hence $q(D)=0$.

%
%

\subsection{}\label{sec:2iv-i} (iv) $\Rightarrow$ (i):  We assume that $X$ comes with a Lagrangian fibration $f\colon X\to B$ 
such that $D=f^{-1}(H)$ (as sets) for some hypersurface $H\subset B$ and want to show that the closure of the generic leaf is of dimension $n$ (and in fact a torus). \smallskip

Assume first that $H$ is contained in the discriminant locus $\Delta\subset B$. Then $D$ is algebraically integrable by a result of Hwang and Oguiso \cite[Thm.\ 1.2]{HO}.
By the results of Section \ref{sec:Case1}, the latter implies that $D$ is uniruled and hence $q(D)<0$, which contradicts (iii) that we proved already in Section \ref{sec:2iv-iii}.\footnote{The argument shows that for any component of the discriminant divisor $H\subset B$ the reduction of $f^{-1}(H)$ cannot be smooth. Either it consists of more than one component, with possibly each component
individually smooth, or it is irreducible but singular.}\smallskip

Assume now that $H$ is not contained in the discriminant divisor. Then, since $D$ is smooth, the generic fibre of $f|_D\colon D\to H=f(D)$ is a smooth Lagrangian torus.
By Corollary \ref{cor:Folvert}, the generic leaf is contained in a fibre of $D\to f(D)$.
We have to show that it is dense in there. Note that for $n=2$ the result is immediate. Indeed, if the generic leaf is not dense in the fibre, then by Section \ref{sec:Case1} the foliation is algebraically integrable and the leaves are rational curves, which however do not exist in a torus.\smallskip

Let $T\coloneqq f^{-1}(t)$, $t\in f(D)$, be a generic fibre. The foliation ${\mathcal F}\subset{\mathcal T}_D$ induces a foliation ${\mathcal F}|_T\subset{\mathcal T}_T$
of the abelian variety $T$. It is well known that
the closure of a leaf of a foliation on an abelian variety is a translate of an abelian subvariety. Indeed, observing ${\mathcal O}_F\cong{\mathcal  F}|_T\subset {\mathcal T}_T\cong{\mathcal O}_F^{\oplus n}$ and writing $T={\mathbb C}^n/\Gamma$,  one finds that the leaves of the foliation ${\mathcal F}$ are given by the images under the natural projection ${\mathbb C}^n\twoheadrightarrow T$ of the translates of the line ${\mathbb C}\subset {\mathbb C}^n$ corresponding to ${\mathcal F}|_T\subset{\mathcal T}_T$.  The closure of the leaf through the origin then corresponds to the smallest linear subspace ${\mathbb C}^m\subset{\mathbb C}^n$ containing the given line and such that $\Gamma\cap {\mathbb C}^m\subset {\mathbb C}^m$ is a lattice.

Thus, if the abelian variety $T$ is known to be simple, which is 
frequently the case, then the assertion is immediate.
\smallskip

If $T$ is not simple, then  Abugaliev proceeds in two steps. The first
 is a result of general interest \cite[Thm.\ 0.5]{Ab1}.\footnote{The reader will observe that the result actually holds without assuming that $X$ is hyperk\"ahler or that $f$ is a Lagrangian fibration.}

\begin{lem}[Abugaliev]
Let $f\colon X\to B$ be a Lagrangian fibration of a projective hyperk\"ahler
manifold 
 and let $H\subset B$ be a very ample hypersurface
not contained in the discriminant divisor of $f$.

If $D=f^{-1}(H)\subset X$ is smooth, then for the generic fibre $T=f^{-1}(t)$, $t\in H$:
$${\rm Im}\left(\!\!\xymatrix{H^\ast(X,{\mathbb Q})\ar[r]^-{{\rm res}_{X,T}}& H^\ast(T,{\mathbb Q})}\!\!\right)={\rm Im}\left(\!\!\xymatrix{H^\ast(D,{\mathbb Q})\ar[r]^-{{\rm res}_{D,T}}& H^\ast(T,{\mathbb Q})}\!\!\right).$$
\end{lem}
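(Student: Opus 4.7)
\medskip

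\noindent\textbf{Proof plan.} The inclusion $\mathrm{Im}(H^\ast(X,\QQ)\to H^\ast(T,\QQ))\subset \mathrm{Im}(H^\ast(D,\QQ)\to H^\ast(T,\QQ))$ is automatic from the factorization $T\hookrightarrow D\hookrightarrow X$, so the content is the opposite inclusion. My approach is to express both images as monodromy invariants via Deligne's global invariant cycle theorem and then compare the two monodromy groups by a Lefschetz-type argument made possible by the assumption that $H$ is very ample.

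First I would set $B^\circ\coloneqq B\setminus\Delta$, where $\Delta\subset B$ is the discriminant of $f$, and $H^\circ\coloneqq H\cap B^\circ=H\setminus(H\cap\Delta)$, which is non-empty by the hypothesis that $H\not\subset\Delta$ and over which both $f$ and $f|_D$ are smooth proper with fibre $T$. Applying Deligne's global invariant cycle theorem to the smooth proper morphism $f\colon f^{-1}(B^\circ)\to B^\circ$ (and using that $X$ is smooth projective), we get
$$\mathrm{Im}\!\left(H^\ast(X,\QQ)\to H^\ast(T,\QQ)\right)=H^\ast(T,\QQ)^{\pi_1(B^\circ,t)}.$$
Applying the same theorem to $f|_D\colon D\cap f^{-1}(H^\circ)\to H^\circ$, using that $D$ is smooth projective by hypothesis and that $H^\circ$ lies in the locus where $f|_D$ is smooth proper, gives
$$\mathrm{Im}\!\left(H^\ast(D,\QQ)\to H^\ast(T,\QQ)\right)=H^\ast(T,\QQ)^{\pi_1(H^\circ,t)}.$$

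The remaining task, and the main obstacle, is to show that the natural map $\pi_1(H^\circ,t)\to\pi_1(B^\circ,t)$ has image large enough that the two invariant subspaces coincide. The cleanest statement is surjectivity, which is exactly a Lefschetz hyperplane theorem for the quasi-projective pair $H^\circ\subset B^\circ$ (with $H$ a very ample divisor on $B$). Since $\dim B=n>1$, an affine/quasi-projective Lefschetz theorem of Hamm--L\^e or Goresky--MacPherson type applied to the complement of the hyperplane section $H\subset B$ yields $\pi_1(B^\circ\setminus H^\circ)\twoheadrightarrow\pi_1(B^\circ)$ being essentially generated by a single extra loop around $H$, and dually the inclusion $\pi_1(H^\circ)\to\pi_1(B^\circ)$ is surjective (the only subtlety being that $B$ itself need not be smooth, which can be circumvented by passing to a resolution $\widetilde B\to B$ and lifting, since $H$ pulls back to a big and nef, and after perturbation very ample, divisor).

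With that surjectivity in hand, the natural inclusion $H^\ast(T,\QQ)^{\pi_1(B^\circ,t)}\supset H^\ast(T,\QQ)^{\pi_1(H^\circ,t)}$ from functoriality becomes an equality, giving the desired reverse inclusion and proving the lemma. I expect the Lefschetz step to be the delicate one because the reader has to be convinced that the relevant general-position hypotheses hold: that $H^\circ$ is a genuine hyperplane section of the (possibly singular) quasi-projective variety $B^\circ$, that $n>1$ suffices to ensure surjectivity on $\pi_1$, and that very ampleness (as opposed to mere ampleness) is what allows one to invoke Bertini-type smoothness of the section together with the Lefschetz theorem in the quasi-projective setting.
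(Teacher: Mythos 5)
Your proposal is correct and takes essentially the same route as the paper: apply Deligne's global invariant cycle theorem to the two smooth families over $B\setminus\Delta$ and $H\setminus\Delta$, and reduce to the surjectivity of $\pi_1(H\setminus\Delta)\to\pi_1(B\setminus\Delta)$, which the paper obtains from a Lefschetz-type theorem for quasi-projective varieties (citing Deligne's Lem.~1.4) exactly as you propose with Hamm--L\^e/Goresky--MacPherson. The general-position worry you flag at the end is handled in the paper by one short remark you should add: the assertion is invariant under small deformations of $H$ within its very ample linear system (these preserve smoothness of $D$), so one may assume $H\cap\Delta$ is sufficiently generic for the Lefschetz theorem to apply.
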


Note that the left hand side is known to be isomorphic to $H^\ast({\mathbb P}^n,{\mathbb Q})$ according to results by Matsushita, Oguiso, Voisin, and Shen--Yin, see the survey \cite[Thm.\ 2.1]{HM} for references. In particular, it is of dimension one in each even degree.

\begin{proof}
The assertion is invariant under small deformations of $H$, which preserve the smoothness of $D$. 
One may assume that the intersection $H\cap\Delta$
with the discriminant locus is sufficiently generic such that $\pi_1(H\setminus\Delta)\twoheadrightarrow \pi_1(B\setminus\Delta)$ is surjective
(and in fact an isomorphism for $n>2$) by \cite[Lem.\ 1.4]{Deligne} applied to $B\setminus \Delta$.
In particular, the monodromy invariant parts of $H^\ast(T,{\mathbb Q})$ for the two families $X\to B$ and $D\to H$ coincide. Thus, Deligne's invariant cycle theorem
implies
$${\rm Im}({\rm res}_{X,T})=H^\ast(T,{\mathbb Q})^{\pi_1(B\setminus \Delta)}=
H^\ast(T,{\mathbb Q})^{\pi_1(H\setminus \Delta)}={\rm Im}({\rm res}_{D,T}),$$
which concludes the proof.
\end{proof}

The idea of the second step is that the family of tori obtained as closures of leaves
$L\subset \bar L\subset T$ contained in a fixed generic fibre  $T$ is distinguished and hence invariant under monodromy. This gives a cohomology class in $H^{2k}(T,{\mathbb Q})$ that is invariant under monodromy of the family $D\to H$. However, classes that are invariant under the monodromy of the family $X\to B$
are all powers of the polarization and, therefore, cannot be realized by proper subtori.

%



\subsection{}\label{sec:2ii-iii} (iv) $\Rightarrow$ (ii) $\Rightarrow$ (i): Assume first (iv) holds.
  By Corollary \ref{cor:Folvert}, the generic leaf $L$ is contained in a fibre of $D\to f(D)\subset B$. If we allow ourselves to use (iv) $\Rightarrow$ (i) in
Section \ref{sec:2iv-i}, then the closure $\bar L$ is the generic fibre which is a torus. The second implication (ii) $\Rightarrow$ (i) is
clear.

\subsection{} Let $T=f^{-1}(t)\subset X$ be a smooth fibre of a Lagrangian fibration $f\colon X\to B$. Then $T$ is isomorphic to an abelian variety and picking a point $x\in X$ allows one to write
$T$ as a torus $T=T_xT/\Gamma$. For a hypersurface $t\in H\subset B$, which we assume to be smooth at $t$, we let
 $D\coloneqq f^{-1}(H)$ be its pre-image. Since the symplectic structure  of $X$ provides an isomorphism $T_xT\cong T_t^\ast B$, the tangent space $T_tH\subset T_tB$, viewed as a line in $T_t^\ast B$, corresponds to a line  $\ell_H\subset T_xT$. The  image of this line under $T_xT\to T$ gives the leaf through $x\in D$ of the characteristic foliation on $D$. If $H$ is chosen very general, then the line 
 $\ell_H\subset T_xT$ is very general and, therefore, its image in the quotient
 $T=T_xT/\Gamma$ is dense. This proves the following.

\begin{prop} For a fixed smooth fibre $T=f^{-1}(t)\subset X$ of a Lagrangian fibration $f\colon X\to B$ and a very general smooth hypersurface $t\in H\subset B$ the leaf of the characteristic foliation
of $D=f^{-1}(H)$ through a point $x\in T$ is dense in the fibre $T$.\qed
\end{prop}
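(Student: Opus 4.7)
The plan is to combine the explicit description of the leaf from the preceding paragraph with the analysis of foliations on abelian varieties already carried out in Section \ref{sec:2iv-i}. By that description, the leaf through $x\in T$ is the image in $T=T_xT/\Gamma$ of the complex line $\ell_H\subset T_xT$ corresponding, via the symplectic identification $T_xT\cong T_t^\ast B$, to $T_tH\subset T_tB$. As observed in Section \ref{sec:2iv-i}, the Zariski closure of such a leaf on an abelian variety is a translate of the smallest complex abelian subvariety $T'\subset T$ with $\ell_H\subset T_0T'$. The task therefore reduces to showing that for a very general $H$ no proper abelian subvariety $T'\subsetneq T$ satisfies this inclusion.

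The key input is the classical fact that an abelian variety has only countably many abelian subvarieties, since these are determined by the rational sub-Hodge structures of $H^1(T,\QQ)$. Consequently
$$Z\coloneqq\bigcup_{T'\subsetneq T}{\mathbb P}(T_0T')\subset{\mathbb P}(T_xT)$$
is a countable union of proper projective subspaces. It thus suffices to produce, for very general $H$, a line $\ell_H$ whose class avoids $Z$. For this I would argue that the assignment $H\mapsto[T_tH]\in{\mathbb P}(T_t^\ast B)$ is dominant: inside any sufficiently ample linear system on $B$ containing a given member through $t$, the tangent hyperplane of a smooth member at $t$ can be prescribed arbitrarily by a Bertini-type argument. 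Composing with the symplectic identification ${\mathbb P}(T_t^\ast B)\cong{\mathbb P}(T_xT)$, a very general smooth $H$ through $t$ then yields a very general $[\ell_H]\notin Z$, completing the proof.

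The principal obstacle is not the countability estimate itself but spelling out the dominance of $H\mapsto[T_tH]$ on the appropriate parameter space of smooth hypersurfaces through $t$ --- in particular, verifying that the smoothness condition imposes no restriction on the tangent hyperplane at $t$. This is a routine technicality once one fixes a suitable embedding of $B$, whereas the countability of abelian subvarieties is precisely what gives ``very general'' its force here: the conclusion would genuinely fail for merely Zariski-generic $H$ were $T$ forced to decompose in some parameter-dependent fashion, but the subtori of a fixed $T$ are rigid, so the argument goes through.
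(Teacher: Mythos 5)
Your argument is essentially the paper's own: the proposition is stated with a \qed precisely because the preceding paragraph identifies the leaf through $x$ with the image in $T=T_xT/\Gamma$ of the line corresponding to $T_tH$ under the symplectic identification $T_xT\cong T_t^\ast B$, and density for very general $H$ is then exactly the point you make via the countably many abelian subvarieties of $T$ (already invoked in Section \ref{sec:2iv-i}). The only material you add is the routine Bertini-type verification that the tangent hyperplane at $t$ of a smooth member can be prescribed arbitrarily, a detail the paper leaves implicit in the phrase ``very general''; your write-up is correct.
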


\section{Case (3): Dense leaves}\label{sec:Case3}
Again, the equivalence of the conditions (i)-(iv) holds in dimension four, but assumes that the abundance conjecture holds for $D$.  Hwang and Viehweg \cite{HV} showed that if $D$ is of general type, the foliation is not algebraically integrable.  In the converse direction, in dimension four, Amerik and Campana \cite{AC} proved that the foliation is algebraically integrable if and only if $D$ is uniruled. The assertion that $D$ being nef and big implies density
of the leaves is due to Abugaliev and  the main result of \cite{Ab2}.

\subsection{}\label{sec:3i-iii} (i) $\Rightarrow$ (iii):  
We assume that $\dim\bar L=2n-1$ and want to exclude that $q(D)<0$ or $q(D)=0$.

First, by the results of Section \ref{sec:Case1}, we know that the three conditions $q(D)<0$, $D$ uniruled, and $\dim \bar L=1$ are all equivalent. Hence, $q(D)<0$ is excluded for $\dim\bar L>1$. 

Next suppose $q(D)=0$. Then, by Proposition \ref{prop:Abtrick}, $D$ is nef and hence also
$\omega_D\cong{\mathcal O}(D)|_D$ is. Assuming the abundance conjecture for $D$, we conclude that $\omega_D$ and, therefore, $\ko(D)$ are semi-ample, cf.\ the argument in Section \ref{sec:2iii-iv}. Hence, $X$ comes with a Lagrangian fibration $X\to B$ such that some multiple of $D$ is the pre-image of a divisor in $B$. However, then the generic leaf is not dense, as a leaf passing through a smooth fibre stays in this fibre, cf.\ the discussion in Section \ref{sec:2iv-i}. 

\subsection{}\label{sec:3iii-iv} (iii) $\Rightarrow$ (iv):  We assume $q(D)>0$.
Clearly, if $D$ is ample, then by adjunction $\omega_D\cong \ko(D)|_D$ is ample as well and, therefore, $D$ is of general type. If $D$ is only nef, then a priori also
$\omega_D$ is only nef. However, $q(D)>0$ implies $\int_D{\rm c}_1(\omega_D)^{2n-1}>0$, i.e.\ $\omega_D$ is big and nef. By the Kawamata--Viehweg vanishing theorem $H^i(D,\omega_D^k)=0$ for $k>1$ and $i>0$ and by the Hirzebruch--Riemann--Roch formula $h^0(D,\omega_D^k)\sim k^{2n-1}$. Hence, $D$ is of general type. Since by Proposition \ref{prop:Abtrick}, any smooth hypersurface $D\subset X$ with $q(D)\geq0$ is nef, this concludes the proof.
\smallskip

Here is an alternative argument not relying on Proposition \ref{prop:Abtrick}:
Since $D$ is contained in the interior of the positive cone, it is also contained
in the interior of the pseudo-effective cone and, therefore, big by \cite[Prop.\ 2.2.6]{Laz}, i.e.\ $h^0(X,\ko(kD))\sim k^{2n}$. Using $\omega_D\cong\ko(D)|_D$ eventually shows that $\omega_D$
is big, i.e.\ that $D$ is of general type.

\subsection{}\label{sec:3iv-iii} (iv) $\Rightarrow$ (iii): We assume that $D$ is of general type and want to prove $q(D)>0$ by excluding the other two  possibilities $q(D)<0$ and $q(D)=0$.

Suppose $q(D)<0$. Then by virtue of Section \ref{sec:1iii-iv}, e.g.\ by applying the Miyaoka--Mori numerical criterion for uniruledness \cite{MM, Miyaoka}, we know that $D$ is uniruled, so in particular not of general type.

Next, suppose that $q(D)=0$, which implies $\int_D{\rm c}_1(\omega_D)^{2n-1}=(D)^{2n}=0$. Now use again Proposition \ref{prop:Abtrick} to conclude that $\ko(D)$ and hence $\omega_D\cong\ko(D)|_D$ are nef. However, a nef divisor $E$ on a projective variety $Z$ of dimension $m$  is big, i.e.\ $h^0(Z,\ko(kE))\sim k^m$, if and only if $(E)^m>0$, see \cite[Thm.\ 2.2.16]{Laz}. Since $D$ is assumed to be of general type and so $h^0(D,\ko(kD)|_D)\sim k^{2n-1}$, this 
is a contradiction.

\subsection{}\label{sec:3iii-i} (iii) $\Rightarrow$ (i): We assume $q(D)>0$ and want to show that the leaves are dense.  We have seen already that $q(D)>0$ implies that $D$ is big and nef.  The first step is to prove
a version of the Lefschetz hyperplane theorem \cite[Prop.\ 3.1]{Ab2}.

\begin{lem}\label{lem:hardLefschetz}
Assume $D\subset X$ is a smooth hypersurface of a hyperk\"ahler manifold that is   big and nef. Then the restriction 
$$H^i(X,\QQ)\congpf H^i(D,\QQ)$$
is an isomorphism for $i<\dim (D)=2n-1$.
\end{lem}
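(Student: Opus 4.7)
The strategy is to mimic the classical Lefschetz hyperplane theorem, replacing Kodaira vanishing by Kawamata--Viehweg vanishing so as to handle the big and nef (rather than ample) hypothesis. Since $X$ is hyperkähler, $\omega_X \cong \mathcal{O}_X$, which makes the vanishing results especially clean. By the Hodge decomposition, the restriction $H^i(X, \mathbb{Q}) \to H^i(D, \mathbb{Q})$ is a morphism of pure rational Hodge structures, so it suffices to show that
$$H^q(X, \Omega^p_X) \longrightarrow H^q(D, \Omega^p_D)$$
is an isomorphism for every bidegree with $p + q < 2n - 1$.

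The case $p = 0$ is essentially immediate. The short exact sequence
$$0 \to \mathcal{O}_X(-D) \to \mathcal{O}_X \to \mathcal{O}_D \to 0$$
combined with the vanishing $H^i(X, \mathcal{O}_X(-D)) = 0$ for $i < 2n$ (obtained from Kawamata--Viehweg applied to the big and nef $\mathcal{O}_X(D)$, then Serre duality, using $\omega_X \cong \mathcal{O}_X$) yields the required isomorphism in this range.

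For $p \geq 1$, my plan is to combine the twisted restriction sequence
$$0 \to \Omega^p_X(-D) \to \Omega^p_X \to \Omega^p_X|_D \to 0$$
with the conormal exact sequence
$$0 \to \mathcal{O}_D(-D) \otimes \Omega^{p-1}_D \to \Omega^p_X|_D \to \Omega^p_D \to 0$$
(using $N^\ast_{D/X} \cong \mathcal{O}_D(-D)$), and to induct on $p$. This reduces the statement to two vanishings: $H^q(X, \Omega^p_X(-D)) = 0$ for $p + q < 2n$ and $H^q(D, \Omega^{p-1}_D \otimes \mathcal{O}_D(-D)) = 0$ for $p + q \leq 2n - 1$. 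For the second one, adjunction together with $\omega_X \cong \mathcal{O}_X$ gives $\mathcal{O}_D(D) \cong \omega_D$, and by Section \ref{sec:3iii-iv} the line bundle $\omega_D$ is big and nef on $D$; so the same kind of vanishing on $D$ suffices.

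The main obstacle is that the Akizuki--Nakano-type vanishing for big and nef line bundles is strictly weaker than its counterpart for ample ones, and Ramanujam-type examples show that the naive extension can fail. One route is to appeal to the Esnault--Viehweg logarithmic vanishing for the smooth pair $(X, D)$, which does yield the required $H^q(X, \Omega^p_X(-D)) = 0$. A second route, more intrinsic to the hyperkähler setting, is to use that on $X$ the canonical bundle is trivial, so Kawamata's base-point-free theorem applies to the nef and big $\mathcal{O}_X(D)$ and makes $|kD|$ base-point free for some $k$; the resulting birational contraction $\phi\colon X \to Y$ satisfies $\phi^\ast L \cong \mathcal{O}_X(kD)$ for some ample $L$ on $Y$, and the classical Lefschetz statement on $(Y, L)$ can then be transferred back to $(X, D)$ through $\phi$ using that the exceptional locus has high codimension. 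The technical care is concentrated in this last vanishing/contraction step; everything else is formal manipulation of the two exact sequences above.
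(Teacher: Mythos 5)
Your strategy is genuinely different from the paper's: the paper does not attempt any Nakano-type vanishing at all, but instead uses Kawamata--Viehweg only to show $H^i(X,\mathcal{O}(kD))=0$ for $i>0$, so that $D$ deforms sideways in any deformation of $(X,\mathcal{O}(D))$; on a very general such deformation the Picard number is one, the deformed divisor is ample, the classical Lefschetz hyperplane theorem applies there, and the statement transfers back because it is purely topological. Your route is essentially the Hodge-theoretic one of Abugaliev's original paper, and as you yourself observe, it runs straight into the failure of Akizuki--Nakano vanishing for big and nef line bundles. The problem is that your two proposed repairs are not carried out, and as stated each contains a real gap. For the first: Esnault--Viehweg logarithmic vanishing produces $H^q(X,\Omega^p_X(\log D)\otimes\mathcal{O}(-D))=0$, which is a different sheaf from the $\Omega^p_X(-D)$ your induction needs; the sequence $0\to\Omega^p_X(-D)\to\Omega^p_X(\log D)(-D)\to\Omega^{p-1}_D(-D)\to 0$ shows the log statement does not imply the non-log one. (The log vanishing can be used, but only by abandoning your two-exact-sequence induction and instead using that $\Omega^\bullet_X(\log D)(-D)$ resolves $j_!\mathbb{C}_{X\setminus D}$, hence computes $H^\ast_c(X\setminus D)$, from which the Lefschetz statement follows via the long exact sequence of the pair; you do not make this step, nor do you address the second required vanishing $H^q(D,\Omega^{p-1}_D\otimes\mathcal{O}_D(-D))=0$, which is again a Nakano-type statement for a merely big and nef bundle on $D$ and is equally unavailable.)

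For the second route, the base-point-free theorem does make $\mathcal{O}(kD)$ semiample (here projectivity of $X$ and $\omega_X\cong\mathcal{O}_X$ are used), but the claim that the exceptional locus of the resulting contraction $\phi\colon X\to Y$ ``has high codimension'' is unjustified and can fail: the locus contracted by $\phi$ is the null locus of $D$, and a prime divisor $E$ with $q(D,E)=0$ (which is not excluded by $D$ being nef and big with $q(D)>0$) would be contracted, so the exceptional locus may be divisorial. Moreover, even granting control of the exceptional locus, $Y$ is singular and $\phi(D)$ is only a hyperplane section of a singular variety, so ``transferring the classical Lefschetz statement back through $\phi$'' requires an actual argument (Goresky--MacPherson-type Lefschetz on $Y$, comparison of $H^\ast(X)$ with $H^\ast(Y)$ and of $H^\ast(D)$ with $H^\ast(\phi(D))$), none of which is supplied. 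Since Ramanujam-type examples are semiample and big, nothing formal rescues the vanishing you need, so the crucial step of your proof is missing; by contrast, the paper's deformation argument sidesteps all of this with only the $(0,q)$-vanishing from Kawamata--Viehweg plus the projectivity criterion for hyperk\"ahler manifolds.
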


\begin{proof} For an ample hypersurface this is the content of the Lefschetz hyperplane theorem \cite[Thm.\ 3.1.17]{Laz}.
If $D$  is just big and nef,  Kawamata--Viehweg vanishing still shows that all higher cohomology groups $H^i(X,\ko(kD))$, $i>0$, are trivial. Hence, $D$ deforms sideways with $X$ in any family ${\mathcal X}\to \Delta$ for which the line bundle $\ko(D)$ deforms. However, the very general fibre
${\mathcal X}_t$ of the universal such deformation has Picard number one. Therefore, a generic deformation $D_t\subset {\mathcal X}_t$ of $D\subset X$ is ample \cite[Thm.\ 3.11]{Huy}. Hence, $H^i({\mathcal X}_t,\QQ)\congpf H^i(D_t,\QQ)$ for $i<\dim (D_t)$ by the classical  Lefschetz hyperplane theorem. Since the assertion is topological, this suffices to conclude.\footnote{The original proof in \cite{Ab2} uses the Kodaira--Akizuki--Nakano vanishing theorem. The above argument is quicker, but uses deformation theory and the projectivity criterion for hyperk\"ahler manifolds.}
\end{proof}

The key step is the following result \cite[Prop.\ 4.1]{Ab2}.

\begin{prop}[Abugaliev]\label{prop:Abu2}
A smooth hypersurface $D\subset X$ which is big and nef (or,  equivalently, a smooth hypersurface with $q(D)>0$, cf.\ Proposition \ref{prop:Abtrick}) cannot be covered by coisotropic varieties of codimension two in $X$.
\end{prop}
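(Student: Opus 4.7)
The plan is to argue by contradiction. Assume $D$ is covered by a one-parameter family $\{Z_s\}_{s\in S}$ of codimension-two coisotropic subvarieties, and let $e\colon\mathcal Z\to D$ denote the evaluation from the total space of the family. The first observation is that each $Z_s$ is automatically invariant under the characteristic foliation ${\mathcal F}$. Indeed, at a smooth point $z\in Z_s$, the characteristic direction ${\mathcal F}_z$ satisfies $\sigma({\mathcal F}_z,T_zD)=0$ and a fortiori $\sigma({\mathcal F}_z,T_zZ_s)=0$, so ${\mathcal F}_z\subset(T_zZ_s)^\perp\subset T_zZ_s$, the second inclusion being the coisotropy of $T_zZ_s$ in $T_zX$. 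Hence every characteristic leaf meeting $Z_s$ is contained in $Z_s$.

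The second step is structural. On each $Z_s$ the null distribution ${\mathcal N}_s:=\ker(\sigma|_{Z_s})\subset \kt_{Z_s}$ is a rank-two integrable foliation containing ${\mathcal F}|_{Z_s}$, and the symplectic form induces an isomorphism $N_{Z_s/X}\cong{\mathcal N}_s^*$, so by adjunction $\omega_{Z_s}\cong \det{\mathcal N}_s^*$. Glueing the ${\mathcal N}_s$'s along $e$ yields, on a dense open subset of $D$, an integrable rank-two subfoliation ${\mathcal G}\subset \kt_D$ containing ${\mathcal F}$ whose generic leaves are the two-dimensional isotropic null leaves of the various $Z_s$. Combining Lemma~\ref{lem:Fomega}, the isomorphism $\omega_{Z_s}\cong \det{\mathcal N}_s^*$, and the normal bundle sequence $0\to N_{Z_s/D}\to N_{Z_s/X}\to \ko(D)|_{Z_s}\to 0$, a short adjunction computation identifies $K_{\mathcal G}\equiv K_D+[Z_0]$ numerically on $D$, where $[Z_0]$ is the mobile, and hence pseudo-effective, class of a member of the covering family.

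The final and hardest step is to extract a contradiction from positivity. By Proposition~\ref{prop:Abtrick}, $q(D)>0$ makes $\omega_D=\ko(D)|_D$ big and nef, so $K_{\mathcal G}=K_D+[Z_0]$ is the sum of a big-and-nef class and a pseudo-effective class and is itself big and nef. One then wants to play this against the isotropy of the leaves of ${\mathcal G}$: the natural route is to translate isotropy into Fujiki-type relations between $[Z_0]$, $[D]$ and the classes of $\sigma,\bar\sigma$, transport the resulting intersections between $X$ and $D$ through the hard Lefschetz isomorphism of Lemma~\ref{lem:hardLefschetz}, and apply a Bogomolov--Sommese vanishing to $\det{\mathcal N}_{\mathcal G}^*\subset \Omega^{2n-3}_D$ in order to rule out the bigness of $K_{\mathcal G}$. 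The main obstacle is precisely this last global step: the preceding ones are essentially formal consequences of the linear algebra of Section~\ref{sec:prep} and of standard adjunction, whereas the numerical contradiction requires a genuine computation in the hyperk\"ahler cohomology ring of $X$, and it is here that the coisotropy of the $Z_s$---rather than their codimension alone---is used essentially, through the adjunction $\omega_{Z_s}\cong \det{\mathcal N}_s^*$ on which the entire numerical setup depends.
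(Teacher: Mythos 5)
Your proposal does not actually reach a contradiction: Step 3, which is where the proposition would have to be proved, is only a description of a hoped-for strategy (``one then wants to \dots''), and the route you sketch is doubtful. Even granting the numerical identity $K_{\mathcal G}\equiv K_D+[Z_0]$, a Bogomolov--Sommese argument applied to $\det N_{\mathcal G}^\ast\subset\Omega_D^{2n-3}$ yields no tension: from $\det\kt_D\cong\det{\mathcal G}\otimes\det N_{\mathcal G}$ your identity gives $\det N^\ast_{\mathcal G}\equiv-[Z_0]$, whose Kodaira dimension is already $\leq 0$, and bigness of $K_{\mathcal G}=K_D+[Z_0]$ is not in itself absurd. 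Moreover the identity $K_{\mathcal G}\equiv K_D+[Z_0]$ is not established: your adjunction computation ($N_{Z_s/X}\cong{\mathcal N}_s^\ast$, hence $\omega_{Z_s}\cong\det{\mathcal N}_s^\ast\cong(\omega_D\otimes\ko(Z_s))|_{Z_s}$) only identifies the two line bundles after restriction to a member $Z_s$, and there is no reason the rank-two null distributions of the various members glue to a single foliation ${\mathcal G}\subset\kt_D$ (through a general point several members may pass, with different null planes). Finally, the hypothesis $q(D)>0$ never enters a decisive computation, so the argument as written cannot distinguish the statement from the false one without that hypothesis; Step 1 (coisotropic members are ${\mathcal F}$-invariant) is correct but is the converse of what is needed here and plays no role.

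For comparison, the paper's proof avoids foliations on $D$ entirely and is a short cohomological computation in which coisotropy and $q(D)>0$ are used exactly once each. By Lemma \ref{lem:hardLefschetz} the class of a member lifts to $\alpha\in H^2(X,\QQ)$ with $\alpha|_D=[Z]$; coisotropy of $Z$ forces $\sigma^{n-1}|_Z=0$, hence $[D]\wedge\alpha\wedge\sigma^{n-1}=0$ and $q(D,\alpha)=0$; the Fujiki-type relation (\ref{eqn:gamma}) together with $q(D)>0$ then gives $\int_X[D]^{2n-1}\wedge\alpha=\int_Z [D]|_Z^{2n-2}=0$, which is already absurd when $D$ is ample. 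In the merely big and nef case one uses the covering family in an essential way: $q(D,\alpha)=0$ and the Hodge index theorem give $q(\alpha)<0$, hence $\int_X[D]^{2n-2}\wedge\alpha^2<0$ by the analogous formula, while two members $Z_1,Z_2$ of the family with $[Z_1]=[Z_2]=\alpha|_D$ and $Y=Z_1\cap Z_2$ empty or of codimension two in $D$ give $\int_X[D]^{2n-2}\wedge\alpha^2=\int_Y D|_Y^{2n-3}\geq 0$ by nefness, a contradiction. If you want to salvage your approach, the missing content is precisely such a numerical step; as it stands, the proposal is a plausible setup without a proof of the key inequality.
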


\begin{proof}
Recall that a subvariety $Z\subset X$ of codimension two is called coisotropic if
the kernel of $\sigma|_Z\colon \kt_Z\to \Omega_Z$ (over the smooth locus) is a sheaf of rank two, see Section \ref{sec:fol}.

First observe that by Lemma \ref{lem:hardLefschetz}, that for any subvariety $Z\subset D$ there exists a class
$\alpha\in H^2(X,\QQ)$ with $\alpha|_D=[Z]\in H^2(D,\QQ)$. Clearly, the class $\alpha$ is of type $(1,1)$. On the other hand, if $Z\subset X$ is a coisotropic
subvariety of codimension two, then $0=[Z]\wedge\sigma^{n-1}\in H^{2n+2}(X,\CC)$. So, if $Z\subset D$ is coisotropic
and we write $[Z]=\alpha|_D\in H^2(D,\QQ)$, then  $0=[D]\wedge\alpha\wedge\sigma^{n-1}\in H^{2n+2}(X,\CC)$, which
implies $\int_X[D]\wedge\alpha\wedge\sigma^{n-1}\wedge\bar\sigma^{n-1}=0$ and, therefore, $q(D,\alpha)=0$. 
Now use the well known formula
\begin{equation}\label{eqn:gamma}q(\gamma_1,\gamma_2)\cdot\int_X\gamma_1^{2n}=2q(\gamma_1)\cdot\int_X\gamma_1^{2n-1}\wedge\gamma_2,
\end{equation} cf.\ \cite[Exer.\ 23.2]{HuyGHJ} to deduce from $q(D)>0$ 
  and $q(D,\alpha)=0$, that $0=\int_X[D]^{2n-1}\wedge\alpha=\int_Z[D]|_Z^{2n-2}$.
   If $D$ is ample, this is absurd. So we proved the stronger statement that an ample, smooth
hypersurface $D\subset X$ does not contain any coisotropic subvariety of codimension two.\footnote{In dimension four this says that a smooth ample
hypersurface does not contain any smooth Lagrangian surface, see Section \ref{sec:exasing}.}
If $D$ is only big and nef, then $q(D,\alpha)=0$ still implies $q(\alpha)<0$ by the
Hodge index theorem, which in turn, by a formula
\cite[Lem.\ 4.2]{Ab2} similar to (\ref{eqn:gamma}), gives
$\int_X[D]^{2n-2}\wedge\alpha\wedge\alpha<0$. However, if $D$ can be covered by
coisotropic varieties of codimension two, then there exist two such $Z_1,Z_2\subset D$
realising the same class  $\alpha|_D=[Z_1]=[Z_2]$ and then  $Y\coloneqq Z_1\cap Z_2\subset D$ is empty or of codimension two in $D$, which leads to the contradiction  $0\leq\int_YD|_Y^{2n-3}=\int_X[D]^{2n-2}\wedge\alpha\wedge\alpha<0$.
 \end{proof}

Recall that a subvariety $Z\subset D$ is called \emph{invariant} under the 
characteristic foliation of the smooth hypersurface $D$ if the leaf through any
$x\in Z$ is contained in $Z$ or, equivalently, if ${\mathcal F}|_Z\subset\kt_D|_Z$ is contained in $\kt_Z\subset\kt_D|_Z$ (over the smooth locus of $Z$).

The following result \cite[Thm.\ 0.5]{Ab2} is now a consequence of the above discussion. It concludes the proof of (iii) $\Rightarrow$ (i) in Case (3).

\begin{thm}[Abugaliev]\label{lem:Abu2}
Assume $D\subset X$ is a smooth hypersurface of a hyperk\"ahler manifold
$X$ satisfying $q(D)>0$. Then the generic leaf of the characteristic foliation on $D$ is Zariski dense.
\end{thm}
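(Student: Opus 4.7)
The plan is to argue by contradiction, combining Proposition~\ref{prop:Abu2} with Corollary~\ref{cor:Folvert}(ii). Suppose the generic leaf is not Zariski dense and set $d \coloneqq \dim \bar L$ for a generic leaf $L$; by assumption $d \leq 2n-2$. The goal is to produce a positive-dimensional family of codimension-two coisotropic subvarieties of $X$ whose union covers $D$, which is precisely what Proposition~\ref{prop:Abu2} forbids.

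First I would build a ``rational quotient by the algebraic part of $\mathcal F$''. By definition every closure $\bar L_x$ is an algebraic subvariety of $D$, and on a dense open $D^\circ \subset D$ these all have dimension $d$ and bounded degree. The classifying map $x\mapsto [\bar L_x]$ is then a meromorphic map from $D$ to the Chow (or Barlet) space of $X$, and normalizing the closure of its image yields a rational map
$$\pi\colon D \dashrightarrow Q$$
whose generic fibre is a Zariski closure of a leaf, so that $\dim Q = 2n-1-d \geq 1$.

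Next I would use $\pi$ to cut out the desired coisotropic subvarieties. After compactifying $Q$ and, if necessary, resolving the indeterminacy of $\pi$, I pick a very ample linear system $|H|$ on $Q$ of positive dimension. For each member $H' \in |H|$ the subvariety
$$Z_{H'} \coloneqq \overline{\pi^{-1}(H')} \subset D$$
is a union of leaf closures $\bar L_x$, hence invariant under $\mathcal F$, and has codimension one in $D$, i.e.\ codimension two in $X$. By Corollary~\ref{cor:Folvert}(ii) each $Z_{H'}$ is coisotropic in $X$. Since $|H|$ has positive dimension, every point of $Q$ lies in some member, so the family $\{Z_{H'}\}$ covers $D$; moreover any two generic members $H'_1,H'_2 \in |H|$ produce cohomologous coisotropic divisors $Z_{H'_1},Z_{H'_2} \subset D$ whose intersection has codimension two in $D$. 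This is exactly the configuration ruled out in the proof of Proposition~\ref{prop:Abu2}, yielding the desired contradiction and forcing $d = 2n-1$, i.e.\ $\bar L = D$.

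The main obstacle is the very first step: one has to justify rigorously that the Zariski closures of the leaves assemble into an honest algebraic family, so that $\pi\colon D\dashrightarrow Q$ really exists as a rational map of projective varieties. This is essentially the algebraicity of the Chow/Barlet classifying map $x\mapsto [\bar L_x]$, and it is the only point requiring care, since $\mathcal F$ is not assumed algebraically integrable. Once this is granted, the remainder of the argument reduces to a straightforward base-point-freeness bookkeeping and a single appeal to Proposition~\ref{prop:Abu2}.
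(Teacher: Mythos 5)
Your argument is essentially the paper's own proof: argue by contradiction, assemble the leaf closures into a covering family of $\mathcal F$-invariant subvarieties of codimension two in $X$ (your pullbacks of members of a linear system from the rational quotient $Q$ are just a concrete way of carrying out the paper's ``taking unions'' step when $\dim\bar L<2n-2$), apply Corollary \ref{cor:Folvert}(ii) to see these are coisotropic, and contradict Proposition \ref{prop:Abu2}. The single point you flag as delicate --- that the Zariski closures of the leaves form an algebraic family so that $\pi\colon D\dashrightarrow Q$ exists --- is exactly what the paper also takes for granted (``the family of all such leaves gives a covering family''), so your proposal matches the paper's proof both in structure and in level of rigor.
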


\begin{proof}
If the generic leaf $L\subset D$ is not Zariski dense, then its Zariski closure $\bar L\subset D$ defines a proper closed subvariety $Z\subset D$. The family of all such leaves gives a covering family $\{Z_t\}$ of $D$. Assume first that
$Z_t\subset D$ is of codimension two in $X$. Since $Z_t=\bar L$ is clearly
invariant under the characteristic foliation and hence coisotropic by Corollary \ref{cor:Folvert}, (ii), this is a contradiction to Proposition \ref{prop:Abu2}.

If the subvarieties $Z_t$ are of higher codimension, taking unions produces a covering family $\{Z'_s\}$ of $D$ consisting of subvarieties of codimension two in $X$ and such that each $Z'_s$ is a union of $Z_t=\bar L$. In particular, again by Corollary \ref{cor:Folvert}, (ii),
 each $Z'_s$ is coisotropic and one can  conclude as before.
\end{proof}
\subsection{}\label{sec:3iv-i} (iv) $\Rightarrow$ (i): Of course, this direction is a consequence of the implications proved before, but we wish to mention
a weaker statement due to Hwang--Viehweg that motivated much of the later work on characteristic foliations. They proved \cite[Thm.\ 1.2]{HV}
that the characteristic foliation of a smooth hypersurface $D\subset X$ cannot be algebraic or, in other words, that $\dim \bar L>1$.
\section{Alternative summary}
We think it is instructive to present the discussion concerning the equivalence of the two conditions (iii) and (iv) in a somewhat differently structured way, making it more evident where and how foliations are used. 

\subsection{} (iii) $\Rightarrow$ (iv): For a smooth hypersurface
$D\subset X$ one wants to show that the sign of $q(D)$ largely determines the geometry of $D$.

 This part only involves more or less classical results and Proposition \ref{prop:Abtrick}, i.e.\ the nefness of $D$ if $q(D)\geq0$. Recall that the proof of Proposition 
\ref{prop:Abtrick} used foliations in an essential way.
\medskip

$\bullet$ Assume $q(D)<0$. Then $\int_D{\rm c}_1(\omega_D)\cdot H^{2n-2}<0$ and by \cite{MM,Miyaoka} $D$ is uniruled.
\smallskip

$\bullet$ Assume $q(D)=0$. Then $D$ and hence $\omega_D$ are nef by Proposition \ref{prop:Abtrick}. Using abundance conjecture for $D$ combined
with \cite{Dem}, see footnote on page \pageref{foot:DHP}, one finds that $D$ is semi-ample. Therefore, ${\mathcal O}(kD)$
defines a Lagrangian fibration $f\colon X\to B$ for some $k>0$ and hence $D=f^{-1}(f(D))$.
\smallskip

$\bullet$ Assume $q(D)>0$. In this case $D$ is of general type for which we presented two proofs: The one not using Proposition \ref{prop:Abtrick} just observed that under
these assumptions $D$ is in the interior of the pseudo-effective cone and hence big.

\subsection{} (iv) $\Rightarrow$ (iii): The geometry of $D$ determines the sign of $q(D)$. Again, only Proposition \ref{prop:Abtrick} is used.

\medskip

$\bullet$ Assume $D$ is uniruled. Then $q(D)<0$ is proved by excluding $q(D)>0$
and $q(D)=0$. If $q(D)>0$, then $D$ is of general type as explained above.
To exclude $q(D)=0$, one distinguishes two cases: First, if
$D$ is in the
boundary of the movable cone, then $\omega_D={\mathcal O}(D)|_D$ is a limit
of effective divisors and, therefore, pseudo-effective which contradicts the assumption that $D$ is uniruled.
Second, if $D$ is not contained in the boundary of the movable cone, then $D$
is in the interior of the pseudo-effective cone. Hence, $D$ and $\omega_D$ are big, contradicting again the assumption on $D$.
Alternatively, one could apply Proposition \ref{prop:Abtrick} to see
that $D$ and hence $\omega_D$ are nef, but the latter clearly contradicts $D$ being uniruled.
\smallskip

$\bullet$ Assume $D=f^{-1}(H)$ is the set theoretic pre-image of a hypersurface $H\subset B$ in the base of a Lagrangian fibration $f\colon X\to B$.
Then the classes $[D], f^\ast[H]\in H^2(X,{\mathbb Z})$ are proportional.
Since $[H]^{n+1}=0$, also $[D]^{n+1}=0$ in $H^{2n+2}(X,{\mathbb Z})$ and, therefore, $q(D)=0$


\smallskip

$\bullet$ Assume $D$ is of general type. Then $q(D)>0$ is proved by excluding $q(D)<0$ and $q(D)=0$. Indeed, the former would imply that $D$ is uniruled as explained before. 
The latter is excluded by observing that $\omega_D$ is nef by Proposition 
\ref{prop:Abtrick} and big, for $D$ is of general type. However, this implies
$\int_D{\rm c}_1(\omega_D)^{2n-1}>0$ which excludes $q(D)=0$.

\section{Examples}

\subsection{}\label{sec:exasmooth}

We provide examples of divisors for the first two situations in the case that $X$ is the Hilbert scheme $X=S^{[2]}$ of a K3 surface $S$.\smallskip

(i) The natural example for Case (1) is  the exceptional divisor $D=E$ of the Hilbert--Chow morphism $\pi\colon S^{[2]}\rightarrow S^{(2)}$. It is well known that $q(E)=-2$ and it is a $\mathbb{P}^1$ bundle over the diagonal  $S\subset S^{(2)}$. More explicitly:\smallskip

$\bullet$ The divisor $E$ is naturally isomorphic to $\mathbb{P}(\Omega_S)$.\smallskip

$\bullet$ The restriction of the symplectic form of $S^{[2]}$ to $E$ is the pullback of the symplectic form on $S$ via the projection $\mathbb{P}(\Omega_S)\rightarrow S$.\smallskip

$\bullet$ The characteristic foliation ${\mathcal F}$ is the relative tangent bundle ${\mathcal T}_\pi$ of the map $\pi\colon\mathbb{P}(\Omega_S)\rightarrow S$.\smallskip

$\bullet$ The leaves are the $\mathbb{P}^1$ contracted by $\pi$,
which via the identification with $\mathbb{P}(\Omega_S)$ is just the projection to $S$. Hence, the diagonal $S\subset X^{(2)}$  is the space of leaves $D/{\mathcal F}$.
\medskip

(ii) Assume that $S$ admits a genus one fibration. Then 
$S^{[2]}$ comes with a natural Lagrangian fibration $\pi\colon S^{[2]}\to{\mathbb P}^2\cong({\mathbb P}^1)^{[2]}$ over the Hilbert scheme of two points on $\mathbb{P}^1$.
The pre-image  $D$ of a generic line $\ell\subset{\mathbb P}^2$ is a hypersurface
which is smooth by Bertini and satisfies $q(D)=0$. The leaves of the characteristic foliation on $D$ are contained in the fibres of $D\twoheadrightarrow \ell$ but the very general ones are not compact, i.e.\ they are dense in the fibres. This follows from the implications {\rm (iii)} $\Rightarrow$ {\rm (iv)} $\Rightarrow$ {\rm (i)}
proved in Sections \ref{sec:2iii-iv} and \ref{sec:2iv-i}.

The situation changes if $\ell\subset{\mathbb P}^2$ is special. For example, if 
$S_{0}$ is a smooth fibre of $S\rightarrow \mathbb{P}^1$, then the pre-image of $\ell_0\coloneqq\{\{0,t\}\mid t\in{\mathbb P}^1\}\subset({\mathbb P}^1)^{[2]}$
is the hypersurface $ D_0\coloneqq \pi^{-1}(\ell_0)=\{\{p,q\}\mid p \in S_{0}\}$. It still  satisfies
$q(D_0)=0$, but it is not smooth. In fact, it is not even normal, its normalisation
is the natural map ${\rm Bl}_{\Delta}(S_0\times S)\to D_0$ from the blow-up 
in the diagonal in $S_0\times S_0$ which restricts to the degree two map $S_0^2\to S_0^{[2]}\subset D_0\subset S^{[2]}$ and is injective on the complement.
The fibre of $\pi\colon D_0\to \ell_0$ over a point $\{0,t\}$, $t\ne0$, is the 
surface $S_0\times S_t$, which is smooth for all but finitely many $t$.
The singularities of $D_0$ have an effect on the characteristic foliation (of the smooth part) of $D_0$: The leaves in the generic fibre $\pi^{-1}(\{0,t\})$ are the curves $S_0\times x$, $x\in S_t$, which in particular are not dense in the smooth(!) fibre $S_0\times S_t$.\footnote{We wish to thank the referee for this observation.}

%
%

\subsection{}\label{sec:exasing}
As we have just seen, if the hypersurface $D\subset X$ is not smooth,  then typically the conditions
(i)-(iv) are not equivalent.\smallskip

(i) Let us first discuss this in Case (2). Consider the pre-image
$f^{-1}(\Delta)\subset X$  of the discriminant divisor of a Lagrangian fibration $f\colon X \rightarrow B$. Note that even for $\Delta$ irreducible its pre-image may be reducible. By \cite[Thm.\ 1.2]{HO} the characteristic foliation of any irreducible component of $f^{-1}(\Delta)$ is algebraically integrable. Assume that there is a component of $D$ of $f^{-1}(\Delta)$ such that $D=f^{-1}(f(D))$. This happens for instance when  $X$ is general among the hyperk\"ahler manifolds with a Lagrangian fibration. Then, $q(D)=0$ but its characteristic foliation is algebraically integrable. This divisor satisfies (iii) and (iv) of Case (2) but not (i). 


\medskip

(ii) We turn to Case (3).
Consider a smooth cubic fourfold $Y\subset {\mathbb P}^5$ and its Fano
variety of lines $X\coloneqq F(Y)$, which is a hyperk\"ahler fourfold.
The set of lines contained in a hyperplane section $Y\cap H$  is
a Lagrangian surface $F(Y\cap H)\subset X$ which for generic $H$ is 
smooth and of general type. For a one-dimensional family $\{Y\cap H_t\}$ of hyperplane sections these Lagrangian surfaces sweep out a hypersurface $D\subset X$. Then $q(D)>0$, since for a generic cubic fourfold the Picard number of $X=F(Y)$  is one.

According to Corollary \ref{cor:Folvert}, (i), any leaf that intersects a generic $F(Y\cap H_t)$ is contained in it. However, if $D$ were smooth, then the results if Section \ref{sec:3iii-i} would imply that the generic leaf is dense. Contradiction. Hence, 
for no one-dimensional family of hyperplane section $\{Y\cap H_t\}$
can the associated hypersurface $D$  be smooth.

In particular this is an example of a singular divisor that satisfies (iii) and (iv) of Case (3) but not (i).

More abstractly, a smooth ample hypersurface $D\subset X$ in a hyperk\"ahler fourfold does not contain any Lagrangian surface, cf.\  the proof of Proposition \ref{prop:Abu2}. In particular, for a  general cubic fourfold $Y$, a smooth Lagrangian surface $F(Y\cap H_t)$ cannot be contained in any smooth divisor of $X=F(Y)$.


\end{document}